\documentclass[leqno]{amsart}
\usepackage{amsmath,amsfonts,amsthm,amssymb,indentfirst,epic,url}

\setlength{\textwidth}{6.5in}
\setlength{\textheight}{9in}
\setlength{\evensidemargin}{0in}
\setlength{\oddsidemargin}{0in}
\setlength{\topmargin}{-.5in}
\sloppy

\setlength{\mathsurround}{1.67pt}
\newcommand{\<}{\kern.0833em}

\newtheorem{theorem}{Theorem}
\newtheorem{lemma}[theorem]{Lemma}
\newtheorem{corollary}[theorem]{Corollary}
\newtheorem{proposition}[theorem]{Proposition}
\newtheorem{definition}[theorem]{Definition}

\newcommand{\U}{\mathcal{U}}
\newcommand{\card}{\mathrm{card}}
\newcommand{\supp}{\mathrm{supp}}
\newcommand{\fl}[1]{{\lfloor #1\rfloor}}
\newcommand{\strt}[1][1.7]{\vrule width0pt height0pt depth#1pt}% ^ strut

\raggedbottom

\begin{document}

\title[Maps on $k^I$ and product algebras]{Linear maps
on $k^I,$ and homomorphic images of infinite~direct~product~algebras}%
\thanks{After publication of this note, updates, errata, related
references etc., if found, will be recorded at
\url{http://math.berkeley.edu/~gbergman/papers/}
}

\subjclass[2010]{Primary: 15A04, 17A01, 17B05.
%  	                lin_maps nonass L:strc
Secondary: 03C20, 03E55, 08B25, 17B20, 17B30.} % 17B65
%       ultrapr large-card prod&& ssmpl  slv/np  inf--dim
\keywords{linear maps of $k^I$ to vector spaces of small dimension;
measurable cardinals;
homomorphisms on infinite direct products of nonassociative algebras;
simple, solvable, and nilpotent Lie algebras%
}

\author{George M. Bergman}
\address[G. Bergman]{University of California\\
Berkeley, CA 94720-3840, USA}
\email{gbergman@math.berkeley.edu}

\author{Nazih Nahlus}
\address[N. Nahlus]{American University of Beirut\\ %
Beirut, Lebanon}
\email{nahlus@aub.edu.lb}

\begin{abstract}
Let $k$ be an infinite field, $I$ an infinite set,
$V$ a $\!k\!$-vector-space, and $g:k^I\to V$ a $\!k\!$-linear map.
It is shown that if $\dim_k(V)$ is not too large
(under various hypotheses on $\card(k)$ and $\card(I),$
if it is finite, respectively less than $\card(k),$
respectively less than the continuum),
then $\ker(g)$ must contain elements $(u_i)_{i\in I}$
with all but finitely many components $u_i$ nonzero.

These results are used to prove that
every homomorphism from a direct product $\prod_I A_i$
of not-necessarily-associative algebras $A_i$ onto an algebra $B,$
where $\dim_k(B)$ is not too large (in the same senses) is
the sum of a map factoring
through the projection of $\prod_I A_i$ onto the product of
finitely many of the $A_i,$ and a map into
the ideal $\{b\in B\mid bB=Bb=\{0\}\}\subseteq B.$

Detailed consequences
are noted in the case where the $A_i$ are Lie algebras.

A version of the above result is also obtained with
the field $k$ replaced by a commutative valuation ring.
\end{abstract}
\maketitle
% - - - - - - - - - - - - - - - - - - - - - - - - - - - - - -

This note resembles \cite{prod_Lie1} in that the two papers obtain
similar results on homomorphisms on infinite product algebras;
but the methods are different, and the hypotheses under which the
methods of one note work
are in some ways stronger, in others weaker, than those of the other.
Also, in~\cite{prod_Lie1} we obtain many consequences from
our results, while here we aim for brevity, and
after one main result about general algebras, restrict
ourselves to a couple of quick consequences for Lie algebras.

The authors are grateful to Leo Harrington and Tom Scanlon for
helpful pointers to the literature, and to Jason Bell for
the strengthened version of Lemma~\ref{L.JPBell} used below.

\section{Definitions, and first results}\label{S.first}

Let us fix some terminology and notation.

\begin{definition}\label{D.alg&}
Throughout this note, $k$ will be a field.

By an {\em algebra} over $k$ we shall mean a
$\!k\!$-vector-space $A$ given with a $\!k\!$-bilinear
multiplication $A\times A\to A,$ which we do not
assume associative or unital.

If $A$ is an algebra, we define its {\em total annihilator ideal} to be
\begin{equation}\begin{minipage}[c]{35pc}\label{d.Z(A)}
$Z(A)~\,=~\,\{x\in A\mid xA=Ax=\{0\}\}.$
\end{minipage}\end{equation}

If $a=(a_i)_{i\in I}$ is an element of a direct product algebra
$A=\prod_I A_i,$ then we define its {\em support} as
\begin{equation}\begin{minipage}[c]{35pc}\label{d.supp}
$\supp(a)\ =\ \{i\in I\mid a_i\neq 0\}.$
\end{minipage}\end{equation}
For $J$ any subset of $I,$ we shall identify $\prod_{i\in J} A_i$
with the subalgebra of $\prod_{i\in I} A_i$ consisting of elements
whose support is contained in $J.$
We also define the subalgebra
\begin{equation}\begin{minipage}[c]{35pc}\label{d.Afin}
$A_\mathrm{fin}\ =\ \{a\in A\mid\supp(a)$ {\rm is finite}$\}.$
\end{minipage}\end{equation}
\end{definition}

The importance of $\!k\!$-linear functions on spaces $k^I$
to the study of homomorphisms on direct product algebras arises
from the following curious observation:

\begin{lemma}\label{L.supp}
Suppose $(A_i)_{i\in I}$ is a family of $\!k\!$-algebras,
$B$ a $\!k\!$-algebra,
$f:A=\prod_{i\in I}A_i\to B$ a surjective algebra homomorphism,
and $a=(a_i)_{i\in I}$ an element of $A,$ and consider the linear map
\begin{equation}\begin{minipage}[c]{35pc}\label{d.g_a}
$g_a:k^I\to B$\quad defined by\quad $g_a((u_i))=f((u_i a_i))$\quad
for all\quad $(u_i)\in k^I.$
\end{minipage}\end{equation}

Then\\[.2em]
\textup{(i)}~ If $\ker(g_a)$ contains an element $u=(u_i)_{i\in I}$
whose support is all of $I,$ then $f(a)\in Z(B).$\\[.2em]
\textup{(ii)}~ More generally, for any $u\in\ker(g_a),$ if we write
$a=a'+a'',$ where $\supp(a')\subseteq\supp(u)$ and
$\supp(a'')\subseteq I-\supp(u),$ then $f(a')\in Z(B).$\\[.2em]
\textup{(iii)}~ Hence, if $\ker(g_a)$ contains an element
whose support is cofinite in $I,$ then $a$ is the sum of an
element $a'\in f^{-1}(Z(B))$ and an element $a''\in A_\mathrm{fin}.$
\end{lemma}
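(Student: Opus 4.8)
The plan is to prove the three parts in order, deriving (iii) from (ii) and (ii) as a generalization of (i), so the core work is really in understanding what the hypothesis $u \in \ker(g_a)$ tells us.

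First I would establish (i). Suppose $u = (u_i)_{i \in I} \in \ker(g_a)$ with $\supp(u) = I$, so every $u_i \neq 0$. The hypothesis says $f((u_i a_i)) = 0$ in $B$. The key idea is to compare this with $f(a) = f((a_i))$ via a pointwise rescaling trick: for an arbitrary element $b = (b_i)_{i \in I} \in A$, I want to show $f(a) \cdot f(b) = 0$ and $f(b) \cdot f(a) = 0$, which gives $f(a) \in Z(B)$ since $f$ is surjective. The point is that in the product algebra, $(a_i) \cdot (b_i) = (a_i b_i)$, and one can write $(a_i b_i) = (u_i a_i) \cdot (u_i^{-1} b_i)$ since $u_i$ is invertible (it's a nonzero field element) — here I'm using that $k^I$ acts, or rather that we can factor the product coordinatewise. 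Then applying the homomorphism $f$: $f(a) f(b) = f((a_i b_i)) = f\big((u_i a_i) \cdot (u_i^{-1} b_i)\big) = f((u_i a_i)) \cdot f((u_i^{-1} b_i)) = 0 \cdot f((u_i^{-1} b_i)) = 0$. Symmetrically $f(b) f(a) = 0$. Since $f$ is onto, $f(a)$ annihilates all of $B$ on both sides, so $f(a) \in Z(B)$.

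Next, (ii) is the same argument localized to $\supp(u)$. Given arbitrary $u \in \ker(g_a)$, write $a = a' + a''$ with $\supp(a') \subseteq \supp(u)$ and $\supp(a'') \subseteq I - \supp(u)$. I'd note that $u \in \ker(g_a)$ forces $u \in \ker(g_{a'})$ as well: indeed $g_{a'}((v_i)) = f((v_i a'_i))$, and since $a'_i = 0$ off $\supp(u)$ while $u_i \neq 0$ on $\supp(u)$, the element $(u_i a'_i)$ equals $(u_i a_i)$ restricted to $\supp(u)$, which is a summand of $(u_i a_i)$; more directly, $(u_i a'_i)$ has support in $\supp(u)$ and agrees with $(u_i a_i)$ there, and $(u_i a_i)$ on $\supp(u)$ together with $(u_i a_i)$ off $\supp(u)$ — but off $\supp(u)$, $u_i = 0$ so that part vanishes, hence $(u_i a_i) = (u_i a'_i)$ already, giving $g_{a'}(u) = g_a(u) = 0$. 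Now on the index set $\supp(u)$, the element $u$ restricted there has full support, so by the argument of (i) applied to the product algebra $\prod_{\supp(u)} A_i$ and the element $a'$ (viewed inside it), together with compatibility of $f$ with the inclusion $\prod_{\supp(u)} A_i \hookrightarrow A$, we get $f(a') \in Z(B)$.

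Finally, (iii): if $\ker(g_a)$ contains $u$ with $\supp(u)$ cofinite, apply (ii) to get $a = a' + a''$ with $f(a') \in Z(B)$, i.e.\ $a' \in f^{-1}(Z(B))$, and $\supp(a'') \subseteq I - \supp(u)$, which is finite, so $a'' \in A_{\mathrm{fin}}$ by \eqref{d.Afin}. The main obstacle I anticipate is being careful in (ii) about the bookkeeping between the full product $A = \prod_I A_i$ and the sub-product $\prod_{\supp(u)} A_i$ — in particular verifying cleanly that the rescaling factorization $(a'_i b_i) = (u_i a'_i)(u_i^{-1} b_i)$ only needs $u_i$ invertible on $\supp(u)$ (where $a'$ lives) and that nothing goes wrong at the indices where $u_i = 0$, since there $a'_i = 0$ too and both sides vanish. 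Everything else is a direct computation using bilinearity of the product and the homomorphism property of $f$.
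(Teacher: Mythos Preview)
The proposal is correct, and parts~(i) and~(iii) match the paper exactly. For~(ii) the paper takes a slightly cleaner route: rather than rerunning the rescaling computation for $a'$ and tracking the indices where $u_i=0$, it defines $u'\in k^I$ by $u'_i=u_i$ on $\supp(u)$ and $u'_i=1$ elsewhere, observes that $u'a'=ua$ so $g_{a'}(u')=0$, and then applies~(i) to $a'$ with the full-support element $u'$ as a black box. Your direct verification is equally valid and arguably more transparent about why the computation goes through. One wording issue to fix: when you speak of ``the argument of~(i) applied to the product algebra $\prod_{\supp(u)} A_i$,'' note that the restriction of $f$ to that subalgebra need not be surjective onto $B$, so~(i) as stated does not literally apply there. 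Your final paragraph correctly identifies and resolves this: the computation is carried out in the full product $A$, writing arbitrary $b\in B$ as $f(x)$ with $x\in A$, and the factorization $(a'_i x_i)=(u_i a'_i)\,(u_i^{-1}x_i)$ (with the second factor interpreted as $0$, or anything, at indices $i\notin\supp(u)$) is legitimate because both sides vanish wherever $a'_i=0$.
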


\begin{proof}
(i): Given $u$ as in (i), and any $b\in B,$ let us write $b=f(x),$ where
$x=(x_i)\in A,$ and compute
\begin{equation}\begin{minipage}[c]{35pc}\label{d.fxa}
$f(a)\,b\ =\ f(a)f(x)\ =\ f(a\,x)\ =\ f((a_i\,x_i))\ =
\ f((u_i\,a_i\,u_i^{-1}\,x_i))\\[.2em]
\strt\quad=\ f((u_i\,a_i))\,f((u_i^{-1}\,x_i))\ =
\ 0\,f((u_i^{-1}\,x_i))\ =\ 0.$
\end{minipage}\end{equation}
So $f(a)$ left-annihilates all elements of $B;$ and by the
same argument with the order of factors reversed, it
right-annihilates all elements of $B.$
Thus, $f(a)\in Z(B),$ as claimed.

(ii):  Let $u'\in k^I$ be defined by taking $u'_i=u_i$ for
$i\in\supp(u),$ and $u'_i=1$ for $i\notin\supp(u).$
Thus, $\supp(u')=I;$ moreover, $u'a'=ua,$
whence $f(u'a')=f(ua)=0.$
Hence, $\ker(g_{a'})$
contains the element $u'$ whose support is $I;$
so by~(i), $f(a')\in Z(B).$

(iii) clearly follows from~(ii).
\end{proof}

Remark:  In the context of the above lemma, if the element
of $k^I$ having all entries equal to $1$ lies in $\ker(g_a),$ this
says that $f(a)=0.$
Part~(i) of the lemma says that, more generally, if an element with all
entries invertible lies in $\ker(g_a),$ then $f(a)$ is ``very
close to'' being zero.

Motivated by statement~(iii) of the lemma, let us look
for conditions under which the kernel
of a homomorphism on $k^I$ must contain elements of cofinite support.
Here is an easy one.

\begin{lemma}\label{L.poly}
Let $I$ be a set with $\card(I)\leq\card(k),$
and $g: k^I\to V$ a $\!k\!$-linear map, for some finite-dimensional
$\!k\!$-vector-space $V.$
Then there exists $u\in\ker(g)$ such that $I-\supp(u)$ is finite.
\end{lemma}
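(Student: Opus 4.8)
The plan is to exploit the hypothesis $\card(I)\le\card(k)$ by fixing an injection of $I$ into $k$, thereby regarding each index $i\in I$ as a distinct scalar $t_i\in k$. For a polynomial $p\in k[x]$ the tuple $(p(t_i))_{i\in I}$ is then a well-defined element of $k^I,$ and $p\mapsto(p(t_i))_{i\in I}$ is $\!k\!$-linear; composing with $g$ yields a $\!k\!$-linear map $\varphi:k[x]\to V.$ The elements of $\ker(g)$ that I am after will be the images under this correspondence of suitable nonzero polynomials of small degree.

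Set $n=\dim_k V.$ Since the $n+1$ vectors $\varphi(1),\varphi(x),\dots,\varphi(x^n)$ all lie in the $\!n\!$-dimensional space $V,$ they are linearly dependent, so there is a nonzero polynomial $p(x)=\sum_{j=0}^{n}c_j x^j$ of degree at most $n$ with $\varphi(p)=0,$ i.e.\ with $u:=(p(t_i))_{i\in I}\in\ker(g).$ Because a nonzero polynomial of degree $\le n$ over a field has at most $n$ roots, and the $t_i$ are distinct, we get $p(t_i)=0$ for at most $n$ indices $i;$ hence $I-\supp(u)$ has cardinality at most $n,$ and in particular is finite, as required. (When $I$ is finite the statement is trivial, e.g.\ $u=0$ works.)

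There is really no serious obstacle here --- this is precisely the ``easy one'' the text advertises. The only points that merit a word of care are the well-definedness of $p\mapsto(p(t_i))_{i\in I}$ (immediate, since evaluation at a fixed scalar is linear) and the bound on the number of roots of a nonzero polynomial over a field, which holds irrespective of whether $k$ is finite or infinite; note moreover that if $I$ is infinite then $\card(I)\le\card(k)$ forces $k$ to be infinite, so no genuinely degenerate cases arise in the range of interest.
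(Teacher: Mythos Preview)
Your proof is correct and follows essentially the same approach as the paper: inject $I$ into $k$, push polynomials through $g$ via evaluation, and use finite-dimensionality of $V$ to produce a nonzero polynomial whose evaluation tuple lies in $\ker(g)$, hence has only finitely many zero coordinates. The only difference is cosmetic---you make the degree bound $\le n=\dim_k V$ explicit, whereas the paper simply invokes the infinite-dimensionality of $k[t]$ to get some nonzero element of the kernel.
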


\begin{proof}
By the assumption on $\card(I),$ we can choose $x=(x_i)\in k^I$
whose entries $x_i$ are distinct.
Regarding $k^I$ as a $\!k\!$-algebra under componentwise operations,
let us map the polynomial algebra $k[t]$ into it by
the homomorphism sending $t$ to this~$x.$
Composing with $g:k^I\to V,$ we get a $\!k\!$-linear
map $k[t]\to V.$

Since $V$ is finite-dimensional, this map has nonzero kernel, so we
may choose $0\neq p(t)\in k[t]$ such that $p(x)\in\ker(g).$
Since the polynomial $p$ has only finitely many
roots, $p(x_i)$ is zero for only finitely many $i,$
so $p(x)$ gives the desired~$u.$
\end{proof}

Applying Lemma~\ref{L.poly} to maps $g_a$ as in Lemmas~\ref{L.supp},
and calling on statement~(iii) of the latter, we get

\begin{proposition}\label{P.first}
Let $k$ be an infinite field, let $(A_i)_{i\in I}$ be a family
of $\!k\!$-algebras such that the index set $I$
has cardinality $\leq\card(k),$ let
$A=\prod_I A_i,$ and let $f:A\to B$ be any surjective algebra
homomorphism to a finite-dimensional $\!k\!$-algebra $B.$

Then $B=f(A_{\mathrm{fin}})+Z(B).$
\textup{(}Equivalently, $A= A_{\mathrm{fin}}+f^{-1}(Z(B)).)$

Hence $B$ is the sum of $Z(B)$ and the \textup{(}mutually
annihilating\textup{)} images of finitely many of the $A_i.$
\end{proposition}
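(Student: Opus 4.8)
The plan is to apply Lemma~\ref{L.poly} to the maps $g_a$ produced by Lemma~\ref{L.supp}, and then read off the conclusion from Lemma~\ref{L.supp}(iii). So first I would fix an arbitrary element $a=(a_i)_{i\in I}\in A$ and form the $\!k\!$-linear map $g_a:k^I\to B$ of~\eqref{d.g_a}. Since $B$ is finite-dimensional and $\card(I)\leq\card(k)$, Lemma~\ref{L.poly} gives an element $u\in\ker(g_a)$ with $I-\supp(u)$ finite, i.e.\ of cofinite support. Lemma~\ref{L.supp}(iii) then lets me write $a=a'+a''$ with $a'\in f^{-1}(Z(B))$ and $a''\in A_\mathrm{fin}$. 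As $a$ ranged over all of $A$, this proves $A=A_\mathrm{fin}+f^{-1}(Z(B))$; applying the surjection $f$ and using $f(f^{-1}(Z(B)))=Z(B)$ gives $B=f(A_\mathrm{fin})+Z(B)$, which is the displayed equation.

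For the last sentence, I would next upgrade ``$A_\mathrm{fin}$'' to a product of finitely many of the $A_i$. Here $A_\mathrm{fin}=\bigcup_J\prod_{i\in J}A_i$, the union over finite $J\subseteq I$, is a directed union of $\!k\!$-subspaces of $A$, so $f(A_\mathrm{fin})=\bigcup_J f\big(\prod_{i\in J}A_i\big)$ is a directed union of subspaces of the finite-dimensional space $B$; a directed family of subspaces of a finite-dimensional space has a largest member, so $f(A_\mathrm{fin})=f\big(\prod_{i\in J_0}A_i\big)$ for some finite $J_0\subseteq I$. Writing the finite product $\prod_{i\in J_0}A_i$ as the internal sum of the one-index subproducts $A_i$ ($i\in J_0$), I get $B=Z(B)+\sum_{i\in J_0}f(A_i)$. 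Finally, for distinct $i,j\in J_0$ the subalgebras $A_i$ and $A_j$ of $A$ annihilate each other, since their elements have disjoint support, so $f(A_i)\,f(A_j)=f(A_iA_j)=\{0\}$ and likewise in the opposite order; thus the finitely many images $f(A_i)$ mutually annihilate, completing the argument.

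I do not expect a genuine obstacle: almost all the content is already carried by Lemmas~\ref{L.supp} and~\ref{L.poly}. The one point that needs a moment's care is the replacement of the infinite union $A_\mathrm{fin}$ by a single finite subproduct $\prod_{i\in J_0}A_i$, and this is precisely where the finite-dimensionality of $B$ gets used a second time.
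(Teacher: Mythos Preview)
Your proof is correct and follows essentially the same route as the paper's: apply Lemma~\ref{L.poly} to each $g_a$ and then invoke Lemma~\ref{L.supp}(iii) to get $A=A_\mathrm{fin}+f^{-1}(Z(B))$, and for the last sentence use finite-dimensionality of $B$ to reduce $f(A_\mathrm{fin})=\sum_I f(A_i)$ to finitely many summands. Your directed-union phrasing is just a minor rewording of the paper's observation that a sum $\sum_I f(A_i)$ inside a finite-dimensional space is spanned by finitely many of the $f(A_i)$.
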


\begin{proof}
The first assertion follows immediately from the two preceding lemmas.
To get the final assertion, we note that since $B$ is
finite-dimensional, its subalgebra
$f(A_{\mathrm{fin}})=f(\bigoplus_I A_i)=\sum_I f(A_i)$
must be spanned by the images of finitely many of the $A_i,$
and since the $A_i,$ as subalgebras of $A,$ annihilate
one another, so do those images.
\end{proof}

In the next two sections we shall obtain three strengthenings of
Lemma~\ref{L.poly}, two of which weaken the assumption of
finite-dimensionality of $V,$ while the third, instead,
weakens the restriction on $\card(I).$

\section{Larger-dimensional $V.$}\label{S.dimB}

Our first generalization of Lemma~\ref{L.poly}
will be obtained by replacing the countable-dimensional
polynomial ring $k[t]$ by a subspace of the rational
function field $k(t)$ which has dimension $\card(k)$ over $k.$
Rational functions are not, strictly
speaking, functions; but that will be easy to fudge.

\begin{lemma}\label{L.rat}
For each $c\in k,$ let $p^{(c)}\in k^k$
be the function which for every $x\in k-\{c\}$ has
$p^{(c)}(x)=(x-c)^{-1},$ and at $c$ has the value $0.$
Then any nontrivial linear combination of the elements $p^{(c)}$
has at most finitely many zeroes.

Hence if $I$ is a set of cardinality $\leq\card(k),$ and $g$ is a
$\!k\!$-linear map of $k^I$ to
a $\!k\!$-vector-space $V$ of dimension $<\card(k),$ then $\ker(g)$
contains an element $u$ of cofinite support.
\end{lemma}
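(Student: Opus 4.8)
The plan is to mimic the proof of Lemma~\ref{L.poly}, but with the polynomial algebra $k[t]$ replaced by a larger subspace of rational functions whose evaluations land in $k^I$. First I would verify the ``moreover'' claim about the $p^{(c)}$. Suppose $\sum_{c\in S}\lambda_c\,p^{(c)}$ is a nontrivial $\!k\!$-linear combination, where $S\subseteq k$ is finite and the $\lambda_c$ are nonzero. Clearing denominators, on the set $k-S$ this function agrees with $q(x)/\prod_{c\in S}(x-c)$, where $q(x)=\sum_{c\in S}\lambda_c\prod_{c'\in S-\{c\}}(x-c')$. Since $q(c)=\lambda_c\prod_{c'\neq c}(c-c')\neq 0$ for each $c\in S$, the polynomial $q$ is not the zero polynomial; hence it has only finitely many roots, so the rational function has only finitely many zeros on $k-S$, and $S$ itself is finite, giving the claim.

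Next, for the main assertion, I would pick $x=(x_i)_{i\in I}\in k^I$ with the $x_i$ distinct, which is possible since $\card(I)\le\card(k)$. Define a linear map $h:W\to k^I$, where $W\subseteq k^k$ is the span of the functions $1$ (the constant function) together with all the $p^{(c)}$, $c\in k$, by sending a function $\varphi\in W$ to $(\varphi(x_i))_{i\in I}$ — note that each $x_i$ lies in the common domain $k$ of these functions, so this is well-defined; the value of $p^{(c)}$ at $x_i$ is $(x_i-c)^{-1}$ whenever $x_i\neq c$, and $0$ otherwise. Then I would compose with $g$ to get a $\!k\!$-linear map $g\circ h:W\to V$. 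The space $W$ has dimension $\card(k)$ over $k$ (the $p^{(c)}$ are linearly independent by the claim just proved, since a nontrivial relation among them would be a combination with no zeros yet also identically zero off its finitely many poles — contradiction — and adjoining the constant $1$ keeps the dimension $\card(k)$). Since $\dim_k(V)<\card(k)$, the map $g\circ h$ has nontrivial kernel.

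So there is a nonzero $\varphi\in W$ with $h(\varphi)\in\ker(g)$, i.e.\ $g\bigl((\varphi(x_i))_{i\in I}\bigr)=0$. Write $\varphi=\mu\cdot 1+\sum_{c}\lambda_c\,p^{(c)}$. If the rational-function part $\sum_c\lambda_c\,p^{(c)}$ is nonzero, then by the first paragraph it has only finitely many zeros, so $\varphi$ itself (which equals a constant plus that, hence is again of the form ``polynomial over product of linear factors'' off finitely many points) has only finitely many zeros among the $x_i$ — more carefully, $\varphi$ restricted to $k-S$ agrees with $(q(x)+\mu\prod_{c\in S}(x-c))/\prod_{c\in S}(x-c)$, and the numerator is not identically zero (its degree matches $|S|$ exactly when $\mu\neq 0$, and equals $q\neq 0$ when $\mu=0$), so $\varphi$ has finitely many zeros on $k-S$ and hence on all of $k$. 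If instead the rational part is zero, then $\varphi=\mu\cdot 1$ with $\mu\neq 0$, which has no zeros at all. Either way $u:=h(\varphi)=(\varphi(x_i))_{i\in I}$ has only finitely many zero components, i.e.\ cofinite support, and lies in $\ker(g)$, as desired.

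I do not anticipate a serious obstacle here; the one point needing a little care is the bookkeeping showing that adding the constant function $1$ to the span of the $p^{(c)}$ does not spoil the ``finitely many zeros'' conclusion — that is, handling the case $\mu\neq 0$ — but this reduces, as indicated above, to the observation that a nonzero polynomial has finitely many roots, exactly as in Lemma~\ref{L.poly}. One could alternatively sidestep the constant function entirely by working with differences $p^{(c)}-p^{(c_0)}$ for a fixed $c_0$, or by noting that $W$ as defined already has the right dimension without $1$; but including $1$ costs nothing and makes $W$ a concrete, easily-described space.
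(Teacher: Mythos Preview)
Your proof is correct and follows essentially the same route as the paper's: both verify that a nontrivial combination of the $p^{(c)}$ agrees, off the finite set $S$, with a nonzero polynomial divided by $\prod_{c\in S}(x-c)$ (your evaluation $q(c)=\lambda_c\prod_{c'\neq c}(c-c')\neq 0$ is exactly the paper's trick of multiplying through by $t-c_m$ and evaluating at $c_m$), and then exploit the $\card(k)$-dimensionality of the span to find a kernel element of cofinite support. The paper simply omits your constant function $1$, working directly with the span of the $p^{(c)}$ --- which, as you yourself note, already has dimension $\card(k)$ and spares you the extra $\mu\neq 0$ case analysis.
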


\begin{proof}
In $k(t),$ any linear combination of elements
$(t-c_1)^{-1},\ \dots\,,\ (t-c_n)^{-1}$ for distinct
$c_1,\dots,c_n\in k$ $(n\geq 1),$ such that each of these elements
has nonzero coefficient, gives a {\em nonzero} rational function
\begin{equation}\begin{minipage}[c]{35pc}\label{d.rat}
$a_1(t-c_1)^{-1}+\dots+\,a_n(t-c_n)^{-1}\ =
\ h(t)/((t-c_1)\dots(t-c_n))$\quad (where $h(t)\in k[t]).$
\end{minipage}\end{equation}
Indeed, to see that~(\ref{d.rat}) is nonzero
in $k(t),$ multiply by any $t-c_m.$
Then we can evaluate both sides at $c_m,$ and we find that
the left-hand side then has a unique nonzero term; so we must
have $h(c_m)\neq 0.$
Hence $h(t)$ is a nonzero element of $k[t],$
so~(\ref{d.rat}) is a nonzero element of~$k(t).$

If we now take the corresponding linear combination of
$p^{(c_1)},\,\dots\,,\,p^{(c_n)}$ in $k^k,$ the result has the value
$h(c)/((c-c_1)\dots(c-c_n))$ at each $c\neq c_1,\dots,c_n.$
Hence it is nonzero everywhere except at the finitely many
zeroes of $h(t),$ and some subset of the finite set $\{c_1,\dots,c_n\}.$

We get the final assertion by embedding the set $I$
in $k,$ so that the $p^{(c)}$ $(c\in k)$ induce elements of $k^I.$
These will form a $\!\card(k)\!$-tuple of functions,
any nontrivial linear combination of which is a function
with only finitely many zeroes.
Under a linear map $g$ from $k^I$ to a vector
space $V$ of dimension $<\card(k),$ some nontrivial linear
combination $u$ of these $\card(k)$ elements must go
to zero, yielding a member of $\ker(g)$ with the asserted property.
\end{proof}

(An alternative way to get around the problem that rational
functions have poles would be
to partition $k$ into two disjoint subsets of
equal cardinalities, and use linear combinations of rational functions
$1/(t-c)$ with $c$ ranging over
one of these sets to get functions on the other.)

For $k$ countable, the condition on the dimension of $V$ in the
final statement of the above lemma is no improvement
on what we got in Lemma~\ref{L.poly} using $k[t].$
In an earlier version of this note, we obtained an improvement
on Lemma~\ref{L.poly} for countable $k$ by a diagonal argument,
showing that if $k$ and $I$ are both countably infinite,
then a maximal subspace $W\subseteq k^I$
no nonzero member of which has infinitely many zero coordinates
must be uncountable-dimensional.
Jason Bell communicated to us the following stronger
result, which not only gives a subspace of continuum, rather than
merely uncountable, dimension, but (as is made clear in the proof,
though for simplicity we
do not include it in the statement), also shares with the constructions
of Lemmas~\ref{L.poly} and~\ref{L.rat} the property that
for every finite-dimensional subspace of $W,$ there is a uniform
bound on the number of zero coordinates of its nonzero elements,
which our earlier result lacked.
(The result below was, in fact, given in response to the question
we raised of whether
a construction admitting such uniform bounds was possible.)

\begin{lemma}[sketched by Jason Bell, personal communication]\label{L.JPBell}
If the field $k$ is infinite, and $I$ is a countably infinite set,
then there exists a subspace $W\subseteq k^I$
of continuum dimensionality such that
no nonzero member of $W$ has infinitely many \mbox{zeroes}.

Hence any $\!k\!$-linear map $g$ from $k^I$ to
a $\!k\!$-vector-space $V$ of less than continuum dimension
has in its kernel an element $u$ of cofinite support.
\end{lemma}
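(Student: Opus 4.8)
The plan is to construct the subspace $W\subseteq k^I$ explicitly by identifying $I$ with $\mathbb{N}$ and, for each real number $r$ in some transcendence-base-like family, cooking up an element of $k^\mathbb{N}$ from a power series or continued-fraction-type expansion of $r$, so that linear independence over $k$ of the chosen reals forces linear independence of the resulting elements of $k^\mathbb{N}$. The cleanest realization I would reach for: pick a sequence $(\alpha_n)_{n\in\mathbb{N}}$ of elements of $k$ that grows fast enough to be ``algebraically independent in the limit'' — concretely, for each subset (or, better, each strictly increasing sequence) $S\subseteq\mathbb{N}$ let $w_S\in k^\mathbb{N}$ have $n$th coordinate $\sum_{m\le n,\ m\in S}\alpha_m^{?}$-type partial sums — and then argue that for continuum-many carefully spread-out $S$ the vectors $w_S$ are $k$-linearly independent and every nonzero finite combination has only finitely many zero coordinates. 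Since the lemma is credited to Jason Bell and only sketched, I would expect the intended construction to be short and slick rather than the somewhat clumsy partial-sum idea; a natural candidate is: fix $t\in k$ transcendental (or, if $k$ is algebraic over a finite field, work over the prime field differently) and use the coordinates $n\mapsto t^{f(n)}$ for functions $f$ drawn from an ``independent'' family.

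Concretely, here is the route I would actually carry out. First, reduce to showing the existence of $W$; the ``Hence'' clause is immediate, since a linear map $g\colon k^I\to V$ with $\dim_k V<\mathfrak{c}$ restricted to a continuum-dimensional $W$ must have nonzero kernel, and any nonzero element of $\ker(g)\cap W\subseteq W$ has only finitely many zero coordinates, i.e.\ cofinite support. Second, split into two cases. If $k$ has characteristic zero or is infinite of characteristic $p$ containing an element $t$ transcendental over the prime field, let $K$ be the prime field and note $k\supseteq K(t)$; then for each irrational $2$-adic (or just each infinite binary string) I can build a power series in $t$ over $K$ and truncate it, so the real content is a statement about $K(t)$ or $K[[t]]$. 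Third — the heart — I want a family $\{v_r : r\in R\}$ in $k^\mathbb{N}$, indexed by a continuum-sized set $R$, with the property that whenever $r_1,\dots,r_m\in R$ are distinct, the ``matrix'' of coordinates has the feature that no nontrivial $k$-combination $\sum c_j v_{r_j}$ vanishes in more than finitely many coordinates. The trick is to make $v_r$ encode $r$ digit-by-digit so that a vanishing coordinate is a polynomial condition on the $c_j$ and the digits seen so far, and arrange (by choosing the $r$'s sufficiently generic — e.g.\ a ``Sidon-like'' or ``Hamming-distance-$\infty$'' family of infinite strings, of which there are continuum-many) that each such polynomial condition can hold only finitely often.

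The main obstacle will be precisely that third step: finding the right encoding so that (a) the map $r\mapsto v_r$ is injective on a continuum-sized set, (b) the images are $k$-linearly independent, and (c) — the subtle part — a nonzero linear combination cannot have infinitely many zero coordinates. Point (c) is where the promised \emph{uniform bound} on the number of zeros of nonzero elements of any fixed finite-dimensional subspace should fall out of the construction: if $v_r$ is built so that the $n$th coordinate of $\sum c_j v_{r_j}$, once $n$ is large, is governed by a nonzero polynomial of bounded degree in finitely many auxiliary quantities, then it has boundedly many roots. I would model this on the Lemma~\ref{L.rat} construction, where the functions $p^{(c)}$ already exhibit exactly this ``bounded number of zeros'' behavior; the new ingredient needed for countable $I$ over a possibly-small $k$ is to replace ``evaluate at $\card(k)$-many points'' by ``read off infinitely many digits,'' trading field size for the depth of the coordinate sequence. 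A secondary, more mundane obstacle is the degenerate case where $k$ is an infinite algebraic extension of a finite field (so it contains no transcendental $t$); there one must instead exploit that $k$ is still infinite — e.g.\ pick an infinite sequence of elements $a_1,a_2,\dots\in k$ with each $a_{n+1}$ not in the subfield generated by $a_1,\dots,a_n$, guaranteeing an infinite strictly increasing tower of finite subfields — and run the analogous digit-encoding using these. I expect Bell's actual sketch to handle all cases uniformly by a single clever choice, and the write-up task is mainly to make that choice and verify (a)–(c).
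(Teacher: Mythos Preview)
Your outline is structurally very close to the paper's proof: the same two-case split (either $k$ carries a nontrivial $\mathbb{Q}$-valued valuation $v$ --- which covers characteristic~$0$ and the case of a transcendental element --- or $k$ is an infinite algebraic extension of a finite field and hence contains a strictly ascending tower of subfields $k_0\subset k_1\subset\cdots$), and the same idea of encoding continuum-many objects via the exponent in $n\mapsto t^{f(n)}$ (or more generally $n\mapsto x_{f(n)}$ for a fixed sequence $(x_i)$ with $v(x_i)=i$, respectively with $x_i\in k_i\setminus k_{i-1}$). You have also correctly isolated the ``Hence'' clause as immediate.

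The missing ``clever choice'' you flag is indeed the crux, and the paper's answer is simpler than your digit-encoding and Sidon-set ideas: take $f_\alpha(n)=x_{\lfloor\alpha n\rfloor}$ for each real $\alpha>1$. This gives continuum-many elements, and the bounded-zeros verification is short in each case. In the valuation case, if the $n$th coordinate of $\sum_j c_j f_{\alpha_j}$ vanishes then two terms must share a valuation, giving $v(c_i)+\lfloor n\alpha_i\rfloor=v(c_j)+\lfloor n\alpha_j\rfloor$ for some $i<j$; since $\lfloor n\alpha_i\rfloor-\lfloor n\alpha_j\rfloor$ differs from $n(\alpha_i-\alpha_j)$ by less than~$1$, this confines $n$ to an interval of length $2/(\alpha_j-\alpha_i)$, and summing over pairs $(i,j)$ gives a bound depending only on the $\alpha$'s. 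In the subfield-tower case, once $n\geq\max_i 1/(\alpha_{i+1}-\alpha_i)$ the subscripts $\lfloor n\alpha_j\rfloor$ are strictly increasing in~$j$ (and always in~$n$), so any $d\times d$ coordinate matrix $(x_{\lfloor n_i\alpha_j\rfloor})$ has each lower-right corner outside the subfield generated by the other entries of its minor, forcing nonzero determinant; hence at most $d-1$ large $n$ can give a zero coordinate. Your partial-sum and binary-string encodings might be made to work, but this linear-exponent choice is what makes the uniform bound fall out in a few lines.
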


\begin{proof}
It suffices to prove the stated result for $I=\omega,$ the set of
natural numbers.

Let us first note that if $k$ is either of characteristic~$0,$ or
transcendental over its prime field, then it is algebraic
over a Unique Factorization Domain $R$ which is not a field
(namely, $\mathbb{Z},$ or a polynomial ring over the prime field
of $k).$
This ring $R$ admits a discrete valuation,
which induces a discrete valuation on the field of fractions of $R.$
It is easily deduced from \cite[Prop.~XII.4.2]{SL.Alg} that
this extends to a $\!\mathbb{Q}\!$-valued valuation $v$
on the algebraic extension $k$ of that field,
and by rescaling, $v$ can be assumed to have
valuation group containing $\mathbb{Z}.$
Let us call this situation Case~I.

If we are not in Case~I, then $k$ must be an infinite algebraic
extension of a finite field.
Hence it will contain a countable chain of distinct subfields,
\begin{equation}\begin{minipage}[c]{35pc}\label{d.k0k1}
$k_0\ \subset\ k_1\ \subset\ \cdots\ \subset\ k_i\ \subset\ \cdots~.$
\end{minipage}\end{equation}
Given any field $k$ containing such a chain of subfields
(regardless of characteristic, or algebraicity over a prime field),
we may define a natural-number-valued function $v$
(not a valuation) on $\bigcup_{i\in\omega} k_i\subseteq k$ by
letting $v(x)$ be the least $i$ such that $x\in k_i.$
We shall call the situation where $k$ contains
a chain~(\ref{d.k0k1}) Case~II.
(So Cases~I and~II together cover all infinite
fields, with a great deal of overlap.)

In either case, let us choose elements
$x_0,\ x_1,\ \dots\in k$ such that
\begin{equation}\begin{minipage}[c]{35pc}\label{d.vxi}
$v(x_i)=i$ \quad for all $i\in\omega,$
\end{minipage}\end{equation}
and for every real number $\alpha>1,$ let
$f_\alpha\in k^\omega$ be defined by
\begin{equation}\begin{minipage}[c]{35pc}\label{d.f_alpha}
$f_\alpha(n)=x_\fl{\alpha n}$ \quad $(n\in\omega),$
\end{minipage}\end{equation}
where $\fl{\alpha n}$ denotes the largest integer $\leq\alpha n.$

This gives continuum many elements $f_\alpha\in k^\omega.$
We shall now complete the proof by showing separately in Cases~I
and~II that for any
$1<\alpha_1<\dots<\alpha_d,$ there exists a natural number $N$
such that no nontrivial linear combination
\begin{equation}\begin{minipage}[c]{35pc}\label{d.c1f1+}
$c_1 f_{\alpha_1} +\dots + c_d f_{\alpha_d}$\quad $(c_1,\dots,c_d\in k)$
\end{minipage}\end{equation}
has more than $N$ zero coordinates.

If we are in Case~I, consider any $n$ such
that the $\!n\!$-th coordinate of~(\ref{d.c1f1+}) is zero.
This says that
\begin{equation}\begin{minipage}[c]{35pc}\label{d.sum_i=0}
$\sum_i c_i x_\fl{n\alpha_i}\ =\ 0.$
\end{minipage}\end{equation}
Now if a family of elements of $k$ which are not all zero has zero sum,
then at least two nonzero members of the family must have
equal valuation.
Thus, for some $i<j$ with $c_i, c_j\neq 0$ we have
\begin{equation}\begin{minipage}[c]{35pc}\label{d.v+v=v+v}
$v(c_i) + v(x_\fl{n\alpha_i})\ =\ v(c_j) + v(x_\fl{n\alpha_j}).$
\end{minipage}\end{equation}
By~(\ref{d.vxi}), this says
\begin{equation}\begin{minipage}[c]{35pc}\label{d.v+fl=v+fl}
$v(c_i) + \fl{n\alpha_i}\ =\ v(c_j) + \fl{n\alpha_j}.$
\end{minipage}\end{equation}
From the fact that $\fl{n\alpha_i}$ lies in the interval
$(n\alpha_i -1,\,n\alpha_i],$
and the corresponding fact for $\fl{n\alpha_j},$
we see that $\fl{n\alpha_i} - \fl{n\alpha_j}$ differs by
less than $1$ from $n\alpha_i - n\alpha_j,$
so~(\ref{d.v+fl=v+fl}) implies
\begin{equation}\begin{minipage}[c]{35pc}\label{d.n(*a-*a)}
$n(\alpha_j - \alpha_i)\ \in\ (v(c_i)-v(c_j)-1,\ v(c_i)-v(c_j)+1).$
\end{minipage}\end{equation}
This puts $n$ in an open interval of length $2/(\alpha_j-\alpha_i).$
We have shown that whenever the $\!n\!$-th coordinate of~(\ref{d.c1f1+})
is zero, this relation holds for some pair $i,j;$ so the total
number of possibilities for $n$ is at most
\begin{equation}\begin{minipage}[c]{35pc}\label{d.N=2sum}
$N\ =\ \sum_{i<j} \lceil 2/(\alpha_j-\alpha_i)\rceil,$
\end{minipage}\end{equation}
a bound depending only on $\alpha_1,\dots,\alpha_d$
(and not on $c_1,\dots,c_d),$ as claimed.

Next, suppose we are in Case~II.
Then we claim that for an element~(\ref{d.c1f1+}), there
can be at most $d-1$ values of $n$ with
\begin{equation}\begin{minipage}[c]{35pc}\label{d.n>}
$n\ \geq\ \max_{i=1,\dots,d-1}\,(1/(\alpha_{i+1}-\alpha_i))$
\end{minipage}\end{equation}
for which the $\!n\!$-th coordinate of~(\ref{d.c1f1+}) is zero.
For suppose, on the contrary, that $n_1<\dots<n_d$ all
have this property.
This says that the nonzero column vector of
coefficients $(c_1,\dots,c_d)^{\mathrm{T}}$ is
left annihilated by the $d\times d$ matrix
\begin{equation}\begin{minipage}[c]{35pc}\label{d.(())}
$((x_\fl{n_i \alpha_j})).$
\end{minipage}\end{equation}
Note that the subscripts $\fl{n_i\alpha_j}$ in~(\ref{d.(())})
are strictly increasing in both $i$ and $j;$ the former because
all $\alpha_j>1,$ the latter because all $n_i$ satisfy~(\ref{d.n>}).
It follows that in the matrix~(\ref{d.(())}), every minor
has the property that its lower right-hand entry does not lie
in the subfield generated by its other entries.
From this, it is easy to show by induction that all
minors have nonzero determinant, and so in particular
that~(\ref{d.(())}) is invertible.

But this contradicts the assumption that~(\ref{d.(())})
annihilates $(c_1,\dots,c_d)^{\mathrm{T}}.$
Hence there are, as claimed, at most $d-1$
values of $n$ satisfying~(\ref{d.n>})
such that the $\!n\!$-th entry of~(\ref{d.c1f1+}) is zero;
so the total number of zero entries of~(\ref{d.c1f1+}) is bounded by
\begin{equation}\begin{minipage}[c]{35pc}\label{d.N=max+d-1}
$N\ =\ \max_{i=1,\dots,d-1}\fl{1/(\alpha_{i+1}-\alpha_i)}+d,$
\end{minipage}\end{equation}
which again depends only on the $\alpha_i.$

The final assertion of the lemma clearly follows.
\end{proof}

Remark: In Case~I of the above proof, in place of
condition~(\ref{d.vxi})
we could equally well have used $x_i$ with $v(x_i)=-i.$
Similarly, the proof in Case~II can be adapted to fields $k$
having a {\em descending} chain of subfields
$k=k_0\supset k_1\supset\dots\supset k_i\supset\cdots~:$
in this situation, we define $v$ on $k-\bigcap_{i\in\omega} k_i$
to take each $x$
to the largest $i$ such that $x\in k_i,$ and consider upper
left-hand corners of minors instead of lower right-hand corners.
We know of no use for these observations at present; but they might be
of value in proving some variants of the above lemma.

\section{Larger $I.$}\label{S.I}
For our third generalization of Lemma~\ref{L.poly}, we
return to the hypothesis that $V$ is finite-dimensional, and
prove that in that situation, the statement that every linear map
$g:k^I\to V$ has elements of cofinite support in fact holds
for sets $I$ of cardinality much greater than $\card(k).$

We can no longer get this conclusion by finding an
infinite-dimensional subspace $W\subseteq k^I$
whose nonzero members each have only finitely many zeroes.
On the contrary, when $\card(I)>\card(k)$ (with the former
infinite) there can be no subspace $W\subseteq k^I$ of
dimension $>1$ whose nonzero members all have only finitely many zeroes.
For if $(x_i)$ and $(y_i)$ are linearly independent elements of
$W,$ and we look at the subspaces of $k^2$
generated by the pairs $(x_i,y_i)$ as $i$ runs over $I,$
then if $\card(I)>\card(k),$
at least one of these subspaces must occur at $\card(I)$ many values
of $i,$ but cannot occur at all $i;$ hence some
linear combination of $(x_i)$ and $(y_i)$ will have $\card(I)$ zeroes,
but not itself be zero.
So we must construct our elements of cofinite support in a different
way, paying attention to the particular map~$g.$

Surprisingly, our proof will again
use the polynomial trick of Lemma~\ref{L.poly};
though this time only after considerable preparation.
(We could use rational functions in place of these polynomials as in
Lemma~\ref{L.rat}, or functions like
the $f_\alpha$ of Lemma~\ref{L.JPBell}, but so far
as we can see, this would not improve our result, since
finite-dimensionality of $V$ is required by other
parts of the argument.)

The case of Theorem~\ref{T.main} below that we will deduce
from the result of this section is actually slightly weaker than the
corresponding result proved by different methods in~\cite{prod_Lie1}.
Hence the reader who is only interested in consequences for
algebra homomorphisms $\prod_I A_i\to B$ may prefer to skip
the lengthy and intricate argument of this section.
On the other hand, insofar as our general technique makes the
question, ``For what $k,$ $I$ and $V$ can we say that the kernel of
every $\!k\!$-linear map $k^I\to V$ must contain an element of cofinite
support?''\ itself of interest, the result of this section creates a
powerful complement to those of the preceding section.

We will assume here familiarity with the definitions of ultrafilter and
ultraproduct (given in most books on universal algebra or model theory,
and summarized in \cite[\S14]{prod_Lie1}), and
of $\!\kappa\!$-completeness of an ultrafilter
(developed, for example, in \cite{Ch+Keis} or \cite{Drake},
and briefly summarized in
the part of \cite[\S15]{prod_Lie1} preceding Theorem~47).

In the lemma below, we do not yet restrict $\card(I)$ at all.
As a result, we will get functions with zero-sets
characterized in terms of finitely many $\!\card(k)^+\!$-complete
ultrafilters, rather than finitely many points.
In the corollary to the lemma, we add a cardinality restriction
which forces such ultrafilters to be principal,
and so get elements with only finitely many zeroes.
The lemma also allows $k$ to be finite, necessitating
a proviso~(\ref{d.dim+2}) that its cardinality not be too small
relative to $\dim_k(V);$ this, too, will go away in the corollary,
where, for other reasons, we will have to require $k$ to be infinite.

In reading the lemma and its proof, the reader might bear in mind that
the property~(\ref{d.J0}) makes $J_0$ ``good'' for our purposes,
while $J_1,\dots,J_n$ embody the complications that we must overcome.
The case of property~(\ref{d.J0}) that we will want in the end
is for the element $0\in g(k^{J_0});$ but in the course of
the proof it will be important to consider that property for
arbitrary elements of that subspace.

\begin{lemma}\label{L.ultra}
Let $I$ be a set, $V$ a finite-dimensional
$\!k\!$-vector space such that
\begin{equation}\begin{minipage}[c]{35pc}\label{d.dim+2}
$\card(k)\ \geq\ \dim_k(V)+2,$
\end{minipage}\end{equation}
and $g:k^I\to V$ a $\!k\!$-linear map.

Then $I$ may be decomposed into finitely many disjoint subsets,
\begin{equation}\begin{minipage}[c]{35pc}\label{d.I=}
$I\ =\ J_0\cup J_1\cup\,\dots\,\cup J_n$
\end{minipage}\end{equation}
$(n\geq 0),$ such that
\begin{equation}\begin{minipage}[c]{35pc}\label{d.J0}
every element of $g(k^{J_0})$ is the image under $g$ of an element
having support precisely $J_0,$
\end{minipage}\end{equation}
and such that each set $J_m$ for $m=1,\dots,n$ has on it a
$\!\card(k)^+\!$-complete ultrafilter $\U_m$
such that, letting $\psi$ denote the factor-map
$V\to V/g(k^{J_0}),$ the composite $\psi g: k^I\to V/g(k^{J_0})$
can be factored
\begin{equation}\begin{minipage}[c]{35pc}\label{d.gfactors}
$k^I\ =\ k^{J_0}\times k^{J_1}\times\dots\times k^{J_n}\ %
\to\ k^{J_1}/\,\U_1\times\dots\times k^{J_n}/\,\U_n\ %
\hookrightarrow\ V/g(k^{J_0}),$
\end{minipage}\end{equation}
where $k^{J_m}/\,\U_m$ denotes the ultrapower of $k$ with respect
to the ultrafilter $\U_m,$
the first arrow of~\textup{(\ref{d.gfactors})}
is the product of the natural projections,
and the last arrow is an embedding.
\end{lemma}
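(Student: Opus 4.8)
The plan is this. Because both property~(\ref{d.J0}) and the factorization~(\ref{d.gfactors}) involve $V$ only through the subspace $g(k^{J_0})$ and the quotient by it, I would first replace $V$ by $g(k^I)$ and so assume $g$ surjective; this only lowers $\dim_k(V)$, so~(\ref{d.dim+2}) is preserved. If $\dim_k(V)=0$, take $n=0$ and $J_0=I$: then $g=0$, so $g(k^{J_0})=\{0\}=g(1_I)$ with $\supp(1_I)=I=J_0$, which gives~(\ref{d.J0}), and~(\ref{d.gfactors}) is the trivial map to the zero space. So suppose $\dim_k(V)\geq1$. The crucial first move is to pick $J_0\subseteq I$ so that~(\ref{d.J0}) holds and $\dim_k(g(k^{J_0}))$ is as large as possible; such a $J_0$ exists, these dimensions being bounded by $\dim_k(V)<\infty$ while $J_0=\emptyset$ already satisfies~(\ref{d.J0}). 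Then~(\ref{d.J0}) holds by construction and only~(\ref{d.gfactors}) remains. Writing $\psi\colon V\to V/g(k^{J_0})$ as in the statement and $\bar g=\psi g$, one notes that $\bar g$ kills $k^{J_0}$ (since $g(k^{J_0})=\ker\psi$) and so depends only on the components in $I\setminus J_0$; the whole problem thus reduces to analysing the one $k$-linear map $h\colon k^{I\setminus J_0}\to V/g(k^{J_0})$ obtained by restricting $\bar g$.

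What the maximality of $\dim_k(g(k^{J_0}))$ buys us is that $h$ is \emph{tight}: there is no nonempty $J'\subseteq I\setminus J_0$ with $h(k^{J'})\neq\{0\}$ satisfying the analogue of~(\ref{d.J0}) for $h$ (that every element of $h(k^{J'})$ be the $h$-image of an element whose support is exactly $J'$). Indeed, if such a $J'$ existed, a short computation --- using property~(\ref{d.J0}) for $g$ on $J_0$ applied to an \emph{arbitrary} element of $g(k^{J_0})$ (exactly the situation flagged in the remarks before the lemma), and using $J_0\cap J'=\emptyset$ so that supports do not cancel --- would show that $J_0\cup J'$ satisfies~(\ref{d.J0}) for $g$ with $\dim_k(g(k^{J_0\cup J'}))>\dim_k(g(k^{J_0}))$, contradicting the choice of $J_0$. (If $h=0$, we are already finished, with $n=0$.)

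The heart of the proof is a structural fact I would establish for an \emph{arbitrary} $k$-linear map $h\colon k^S\to U$ into a finite-dimensional $k$-vector-space $U$ with $\card(k)\geq\dim_k(U)+2$: the set $S$ can be written as a disjoint union of finitely many pieces $S=T_1\cup\dots\cup T_s$, each carrying a $\!\card(k)^+\!$-complete ultrafilter $\U_j$, in such a way that $h|_{k^{T_j}}$ factors through the ultrapower $k^{T_j}/\U_j$ for every $j$, and $h(u)$ equals the finite sum of the $h(u|_{T_j})$ for all $u\in k^S$. I would prove this by induction on $\dim_k(U)$: the base case is trivial, and the passage from $\dim_k(U)$ to $\dim_k(U)-1$ goes by picking $\lambda\in U^*$, decomposing the functional $\lambda h$, and applying the inductive hypothesis to the correction maps that land in $\ker\lambda$. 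The engine in the rank-one case is precisely the polynomial trick of Lemma~\ref{L.poly}, now applied to partitions of $S$ into at most $\card(k)$ parts: one assigns distinct scalars to the parts, maps $k[t]$ into $k^S$ by sending $t$ to the resulting step-function, and composes with $h$; then some nonzero polynomial of degree $\leq\dim_k(U)$ lies in the kernel, so $\ker h$ contains a nonzero element that is constant on the parts and vanishes on at most $\dim_k(U)$ of them. Feeding such elements back shows that the finitely additive $U$-valued set function $T\mapsto h(1_T)$ vanishes on all but finitely many parts of any such partition and is additive over the rest --- that is, it is $\card(k)^+$-additive --- and the $\!\card(k)^+\!$-complete ultrafilters on the pieces where $h$ does not vanish are then read off from this ``measure''. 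Applied to the $h$ of the second paragraph, this produces $I\setminus J_0=T_1\cup\dots\cup T_s$ together with vectors $v_j\in V/g(k^{J_0})$ such that $h(u)=\sum_j[\,u|_{T_j}\,]_{\U_j}\,v_j$ for all $u$.

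To finish, I would merge each piece $T_j$ with $v_j=0$ into some other piece, extending that piece's ultrafilter to ignore the newly added indices (it stays $\!\card(k)^+\!$-complete), and relabel the survivors $J_1,\dots,J_n$; then $I=J_0\cup J_1\cup\dots\cup J_n$ is a decomposition as in~(\ref{d.I=}) with every $v_m\neq0$. Since $h\bigl(k^{\bigcup_{m\in M}J_m}\bigr)=\mathrm{span}\{v_m:m\in M\}$ for each $M\subseteq\{1,\dots,n\}$, a vector of this span is the $h$-image of an element of full support in $\bigcup_{m\in M}J_m$ exactly when it can be written as a combination of the $v_m$ $(m\in M)$ with all coefficients nonzero; and a short scalar-avoidance argument, again using $\card(k)\geq\dim_k(V)+2$, shows that every vector of the span has such an expression unless some $v_m$ $(m\in M)$ lies outside the span of the other ones. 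Tightness of $h$ therefore forces every nonempty subfamily of $\{v_1,\dots,v_n\}$ to contain such a $v_m$, so $v_1,\dots,v_n$ are linearly independent; as they also span $\mathrm{im}(h)=V/g(k^{J_0})$ (since $g$ is surjective), they are a basis of it, and the map $\prod_{m=1}^n k^{J_m}/\U_m\to V/g(k^{J_0})$, $(\xi_m)\mapsto\sum_m\xi_m v_m$, is an isomorphism --- in particular the embedding demanded in~(\ref{d.gfactors}). Finally, for every $u\in k^I$ one has $\psi g(u)=\bar g(u|_{I\setminus J_0})=h(u|_{I\setminus J_0})=\sum_{m=1}^n[\,u|_{J_m}\,]_{\U_m}\,v_m$, which is exactly the composite in~(\ref{d.gfactors}), with first arrow the product of the natural projections and last arrow the isomorphism just exhibited. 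The main obstacle here is the structural fact of the third paragraph --- extracting genuine $\!\card(k)^+\!$-complete ultrafilters from a linear map on a power of $k$ --- where the polynomial trick of Lemma~\ref{L.poly} applied to $\card(k)$-fold partitions is the essential device and the cardinality hypothesis~(\ref{d.dim+2}) has to be exploited carefully; the only other nontrivial point is the scalar-avoidance argument promoting tightness to linear independence, and everything else is bookkeeping.
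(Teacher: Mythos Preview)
The ``structural fact'' in your third paragraph is false as stated, and this is a genuine gap, not a matter of missing detail. Take $k$ countable (so $\card(k)^+=\aleph_1$), $S=\omega$, $U=k$, and let $h\colon k^\omega\to k$ be any linear functional with $h(e_i)=0$ for all $i$ but $h((1,1,\dots))=1$ (such $h$ exist by extending $\{e_i\}\cup\{(1,1,\dots)\}$ to a basis). Every $\aleph_1$-complete ultrafilter on a countable set is principal, so your decomposition would force each $h|_{k^{T_j}}$ to be evaluation at some $i_j$ times a scalar $v_j$; but $v_j=h(e_{i_j})=0$, giving $h=0$, a contradiction. The step that fails in your sketch is ``feeding such elements back'': from $h(p(x))=0$ with $p(x)=\sum_c p(c)\,1_{P_c}$ a step-function one cannot conclude anything about the individual $h(1_{P_c})$, because $h$ is not countably additive and the sum is infinite. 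So the set function $T\mapsto h(1_T)$ need not be $\card(k)^+$-additive, and no ultrafilters can be read off from it in general.

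What makes the paper's argument work is that the maximality of $J_0$ (your ``tightness'') is used \emph{before} the polynomial trick, not only afterwards for linear independence. From maximality one first shows that on each minimal piece $J$ (a ``nugget'') the dichotomy ``$g(k^{J'})\subseteq g(k^{J_0})$ or $g(k^{J_0})+g(k^{J'})=g(k^{J_0})+g(k^J)$'' defines an ultrafilter $\U$, and then --- again from maximality --- that every $x\in k^J$ with $\supp(x)\in\U$ has $g(x)\notin g(k^{J_0})$. It is this last fact, combined with the polynomial trick, that forces $\U$ to be $\card(k)^+$-complete: if $\U$ were not, one could build a step-function $z$ with $p(z)\in\ker g$ yet $\supp(p(z))\in\U$, contradicting that implication. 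Your plan to prove the ultrafilter structure for an \emph{arbitrary} $h$ and invoke tightness only at the end cannot succeed; the tightness has to be fed in at the stage where $\card(k)^+$-completeness is established.
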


\begin{proof}
If $\card(k)=2,$ then~(\ref{d.dim+2}) makes $V=\{0\},$ and
the lemma is trivially true (with $J_0=I$ and $n=0);$
so below we may assume $\card(k)>2.$

There exist subsets $J_0\subseteq I$ satisfying~(\ref{d.J0});
for instance, the empty subset.
Since $V$ is finite-dimensional, we may choose
a $J_0$ satisfying~(\ref{d.J0}) such that
\begin{equation}\begin{minipage}[c]{35pc}\label{d.max}
Among subsets of $I$ satisfying~(\ref{d.J0}), $J_0$ maximizes the
subspace $g(k^{J_0})\subseteq V,$
\end{minipage}\end{equation}
i.e., such that no subset $J'_0$ satisfying~(\ref{d.J0}) has
$g(k^{J'_0})$ properly larger than $g(k^{J_0}).$

Given this $J_0,$ we now consider subsets $J\subseteq I-J_0$ such that
\begin{equation}\begin{minipage}[c]{35pc}\label{d.Jmin}
$g(k^J)\not\subseteq g(k^{J_0}),$ and $J$ minimizes the
subspace $g(k^{J_0})+g(k^J)$ subject to this\\
condition, in the sense that every
subset $J'\subseteq J$ satisfies either
\end{minipage}\end{equation}
\begin{equation}\begin{minipage}[c]{35pc}\label{d.small}
$g(k^{J'})\ \subseteq\ g(k^{J_0})$
\end{minipage}\end{equation}
or
\begin{equation}\begin{minipage}[c]{35pc}\label{d.big}
$g(k^{J_0})+g(k^{J'})\ =\ g(k^{J_0})+g(k^J).$
\end{minipage}\end{equation}

It is not hard to see from the finite-dimensionality
of $V,$ and the fact that inclusions of sets $J$ imply the
corresponding inclusions among the
subspaces $g(k^{J_0})+g(k^{J}),$ that such
minimizing subsets $J$ will exist if $g(k^{J_0})\neq g(k^I).$
If, rather, $g(k^{J_0})=g(k^I),$ then the collection of
such subsets that we develop in the arguments below will be
empty, but that will not be a problem.

Let us, for the next few paragraphs, fix such a $J.$
Thus, every $J'\subseteq J$ satisfies
either~(\ref{d.small}) or~(\ref{d.big}).
However, we claim that there cannot be many {\em pairwise disjoint}
subsets $J'\subseteq J$ satisfying~(\ref{d.big}).
Precisely, letting
\begin{equation}\begin{minipage}[c]{35pc}\label{d.e=}
$e\ =\ \dim_k((g(k^{J_0})+g(k^J))/g(k^{J_0})),$
\end{minipage}\end{equation}
we claim that there cannot be $2e$ such pairwise disjoint subsets.

For suppose we had pairwise disjoint sets $J'_{\alpha, d}\subseteq J$
$(\alpha \in\{0,1\},\ d\in\{1,\dots,e\})$ each satisfying~(\ref{d.big}).
Let
\begin{equation}\begin{minipage}[c]{35pc}\label{d.h1he}
$h_1,\dots,h_e\ \in\ g(k^{J_0})+g(k^J)$
\end{minipage}\end{equation}
be a minimal family spanning $g(k^{J_0})+g(k^J)$ over $g(k^{J_0}).$
For each $\alpha\in\{0,1\}$ and $d\in\{1,\dots,e\},$
condition~(\ref{d.big}) on $J'_{\alpha, d}$
allows us to choose an element
$x^{(\alpha,d)}\in k^{J'_{\alpha, d}}$ such that
\begin{equation}\begin{minipage}[c]{35pc}\label{d.x*ad}
$g(x^{(\alpha,d)})\ \equiv\ h_d\ \ (\mathrm{mod}\ g(k^{J_0})).$
\end{minipage}\end{equation}
Some of the $x^{(\alpha,d)}$ may have support strictly smaller
than the corresponding set $J'_{\alpha, d};$ if this happens,
let us cure it by replacing
$J'_{\alpha, d}$ by $\supp(x^{(\alpha,d)}):$ these are still pairwise
disjoint subsets of $J,$ and will still satisfy~(\ref{d.big}) rather
than~(\ref{d.small}), since after this modification, the
subspace $g(k^{J'_{\alpha, d}})$ still contains
$g(x^{(\alpha,d)})\notin g(k^{J_0}).$

We now claim that the set
\begin{equation}\begin{minipage}[c]{35pc}\label{d.J*0}
$J^*_0\ =\ J_0\,\cup\,
\bigcup_{\alpha\in\{0,1\},\ d\in\{1,\dots,e\}} J'_{\alpha, d}$
\end{minipage}\end{equation}
contradicts the maximality condition~(\ref{d.max}) on $J_0.$
Clearly $g(k^{J^*_0})=g(k^{J_0})+g(k^J)$ is strictly
larger than $g(k^{J_0}).$
To show that $J^*_0$ satisfies the analog of~(\ref{d.J0}),
consider any $h\in g(k^{J^*_0})=g(k^{J_0})+g(k^J),$
and let us write it, using the relative spanning set~(\ref{d.h1he}), as
\begin{equation}\begin{minipage}[c]{35pc}\label{d.h0}
$h\ =\ h_0+c_1 h_1+\dots+c_e h_e
\quad (h_0\in g(k^{J_0}),\ c_1,\dots,c_e\in k).$
\end{minipage}\end{equation}
Since $\card(k)>2,$ we can now choose for each $d=1,\dots,e$ an element
$c'_d\in k$ which is neither $0$ nor $c_d,$ and form the element
\begin{equation}\begin{minipage}[c]{35pc}\label{d.cc'}
$x\ =\ (c'_1 x^{(0,1)} + (c_1-c'_1) x^{(1,1)})\,+\,
(c'_2 x^{(0,2)} + (c_2-c'_2) x^{(1,2)})\,+\,\cdots\,+\,
(c'_e x^{(0,e)} + (c_e-c'_e) x^{(1,e)}).$
\end{minipage}\end{equation}
By our choice of $c'_1,\dots,c'_e,$ none of the coefficients
$c'_d$ or $c_d-c'_d$ is zero, so $\supp(x)=\bigcup J'_{\alpha, d}.$
Applying $g$ to~(\ref{d.cc'}), we see
from~(\ref{d.x*ad}) that $g(x)$ is congruent modulo
$g(k^{J_0})$ to $c_1 h_1+\dots+c_e h_e,$ hence,
by~(\ref{d.h0}), congruent to $h.$
By~(\ref{d.J0}), we can find an element $y\in k^{J_0}$ with support
precisely $J_0$ that makes up the difference, so that $g(y)+g(x)=h.$
The element $y+x$ has support exactly $J^*_0;$ and since we have
obtained an arbitrary $h\in g(k^{J^*_0})$
as the image under $g$ of this element, we have shown
that $J^*_0$ satisfies the analog of~(\ref{d.J0}),
giving the desired contradiction.

Thus, we have a finite upper bound (namely, $2e-1)$
on the number of pairwise disjoint subsets $J'$ that
$J$ can contain which satisfy~(\ref{d.big}).
So starting with $J,$ let us, if it is the union of two disjoint
subsets with that property, split one off and
rename the other $J,$ and repeat this process as many times as we can.
Then in finitely many steps, we must
get a $J$ which cannot be further decomposed.
Summarizing what we know about this $J,$ we have
\begin{equation}\begin{minipage}[c]{35pc}\label{d.J}
$g(k^J)\not\subseteq g(k^{J_0}),$
every subset $J'\subseteq J$ satisfies either
$g(k^{J'})\subseteq g(k^{J_0})$ or
$g(k^{J_0})+g(k^{J'})=g(k^{J_0})+g(k^J),$
and no two {\em disjoint} subsets of $J$ satisfy the latter equality.
\end{minipage}\end{equation}

Let us call any subset $J\subseteq I-J_0$ satisfying~(\ref{d.J})
a {\em nugget}.
From the above development, we see that
\begin{equation}\begin{minipage}[c]{35pc}\label{d.nuggets}
Every subset $J\subseteq I-J_0$ such that
$g(k^J)\not\subseteq g(k^{J_0})$ contains a nugget.
\end{minipage}\end{equation}

The rest of this proof will analyze the properties of an arbitrary
nugget $J,$ and finally show (after a possible adjustment
of $J_0)$ that $I-J_0$ can be decomposed into
finitely many nuggets $J_1\cup\dots\cup J_n,$ and that these will have
the properties in the statement of the proposition.

We begin by showing that
\begin{equation}\begin{minipage}[c]{35pc}\label{d.ultra}
If $J$ is a nugget, then the set
$\U=\{J'\subseteq J\mid g(k^{J_0})+g(k^{J'})=g(k^{J_0})+g(k^J)\}$\\
is an ultrafilter on $J_{\strt}.$
\end{minipage}\end{equation}
To see this, note that by~(\ref{d.J}), the complement of $\U$
within the set of subsets of $J$
is also the set of complements relative to $J$ of members
of $\U,$ and is, furthermore, the set of all $J'\subseteq J$ such
that $g(k^{J'})\subseteq g(k^{J_0}).$
The latter set is clearly closed under unions and passing to smaller
subsets, hence $\U,$ inversely, is closed
under intersections and passing to larger subsets of $J;$
i.e., $\U$ is a filter.
Since $\emptyset\notin\U,$
while the complement of any subset of $J$ not in $\U$ does belong
to $\U,$ $\U$ is an ultrafilter.

Let us show next that any nugget $J$ has properties
that come perilously close to making $J_0\cup J$ a counterexample
to the maximality condition~(\ref{d.max}) on $J_0.$
By assumption, $g(k^{J_0\cup J})$ is strictly larger than $g(k^{J_0}).$
Now consider any $h\in g(k^{J_0\cup J}).$
We may write
\begin{equation}\begin{minipage}[c]{35pc}\label{d.h=}
$h\ =\ g(w)+g(x),$\quad where $w\in k^{J_0},\ x\in k^J.$
\end{minipage}\end{equation}
Suppose first that
\begin{equation}\begin{minipage}[c]{35pc}\label{d.hnotin}
$h\notin g(k^{J_0}).$
\end{minipage}\end{equation}
From~(\ref{d.h=}) and~(\ref{d.hnotin})
we see that $g(x)\notin g(k^{J_0}),$ so $\supp(x)\in\U.$
Now take any element $x'\in k^J$ which
agrees with $x$ on $\supp(x),$ and has (arbitrary) {\em nonzero}
values on all points of $J-\supp(x).$
The element by which we have modified $x$ to get $x'$ has
support in $J-\supp(x),$ which is $\notin\U$ because $\supp(x)\in\U;$
hence $g(x')\equiv g(x)\ (\mathrm{mod}\ g(k^{J_0})),$ hence
by~(\ref{d.h=}), $g(x')\equiv h\ (\mathrm{mod}\ g(k^{J_0})).$
Hence by~(\ref{d.J0}), we can find $z\in k^{J_0}$ with
support exactly $J_0$ such that $g(z)+g(x')=h.$
Thus, $z+x'$ is an element with support
$J_0\cup J$ whose image under $g$ is $h.$

This is just what would be needed to make $J_0\cup J$
satisfy~(\ref{d.J0}), if we had proved it for all
$h\in g(k^{J_0\cup J});$ but we have only proved it for $h$
satisfying~(\ref{d.hnotin}) (which we needed to argue
that $\supp(x)$ belonged to $\U).$

We now claim that if there were any $x\in k^J$ with $\supp(x)\in\U$
satisfying $g(x)\in g(k^{J_0}),$ then we would be able to
complete our argument contradicting~(\ref{d.max}).
For modifying such an $x$ by any element with complementary support
in $J,$ we would get an element with support exactly $J$
whose image under $g$ would still lie in $g(k^{J_0}).$
Adding to this element the images under $g$ of
all elements of $k^{J_0}$ with support equal to $J_0,$ we would get
images under $g$ of certain elements with support exactly $J_0\cup J.$
Moreover, since $J_0$ satisfies~(\ref{d.J0}), these sums
would comprise all $h\in g(k^{J_0}),$ i.e., just those values that were
excluded by~(\ref{d.hnotin}).
In view of the resulting contradiction to~(\ref{d.max}), we have proved
\begin{equation}\begin{minipage}[c]{35pc}\label{d.bigsupp}
If $J$ is a nugget, then every $x\in k^J$ with
$\supp(x)\in\U$ satisfies $g(x)\notin g(k^{J_0}).$
\end{minipage}\end{equation}

We shall now use the ``polynomial functions'' trick to show
that~(\ref{d.bigsupp})
can only hold if the ultrafilter $\U$ is $\!\card(k)^+\!$-complete.
If $k$ is finite, $\!\card(k)^+\!$-completeness is vacuous,
so assume for the remainder of this paragraph that $k$ is infinite.
If $\U$ is not $\!\card(k)^+\!$-complete, we can
find pairwise disjoint subsets $J_c\subseteq J$ $(c\in k)$
with $J_c\notin\U,$ whose union is all of $J.$
Given these subsets, let $z\in k^J$ be the element
having, for each $c\in k,$ the value $z_i=c$ at all $i\in J_c.$
Taking powers of $z$ under componentwise multiplication,
we get elements $1,\,z,\dots,z^n,\ldots\in k^J.$
Since $V$ is finite-dimensional, some nontrivial linear combination
$p(z)$ of these must be in the kernel of $g.$
But as a nonzero polynomial, $p$ has only finitely many
roots in $k,$ say $c_1,\dots,c_r.$
Thus $\supp(p(z))=J-(J_{c_1}\cup\dots\cup J_{c_r}).$
Since $J\in\U$ and $J_{c_1},\dots, J_{c_r}\notin\U,$
we get $\supp(p(z))\in\U;$ but since $p(z)\in\ker(g),$
we have $g(p(z))=0\in g(k^{J_0}),$ contradicting~(\ref{d.bigsupp}).
Hence
\begin{equation}\begin{minipage}[c]{35pc}\label{d.card+}
For every nugget $J,$ the ultrafilter $\U$ of~(\ref{d.ultra})
is $\!\card(k)^+\!$-complete.
\end{minipage}\end{equation}

We claim next that~(\ref{d.card+}) implies that for any nugget $J,$
\begin{equation}\begin{minipage}[c]{35pc}\label{d.dim1}
$\dim_k((g(k^{J_0})+g(k^J))/g(k^{J_0}))\ =\ 1.$
\end{minipage}\end{equation}
Indeed, fix $x\in k^J$ with support $J,$ and
consider any $y\in k^J.$
If we classify the elements $i\in J$ according to the
value of $y_i/x_i\in k,$ this gives $\card(k)$ sets, so
by $\!\card(k)^+\!$-completeness, one
of them, say $\{i\mid y_i=c\,x_i\}$ (for some $c\in k)$ lies in $\U.$
Hence $y-c\,x$ has support $\notin\U,$ so $g(y-c\,x)\in g(k^{J_0}),$
i.e., modulo $g(k^{J_0}),$ the element
$g(y)$ is a scalar multiple of $g(x).$
So $g(x)$ spans $g(k^{J_0})+g(k^J)$ modulo~$g(k^{J_0}).$

Let us now choose for each nugget $J$ an element $x_J$ with support $J.$
Thus, by the above observations, $g(x_J)$ spans
$g(k^{J_0})+g(k^J)$ modulo $g(k^{J_0}).$
We claim that
\begin{equation}\begin{minipage}[c]{35pc}\label{d.indep}
For any disjoint nuggets $J_1,\dots,J_n,$ the elements
$g(x^{\strt}_{J_1}),\dots,g(x^{\strt}_{J_n})\in V$ are linearly\\
independent modulo $g(k^{J_0}).$
\end{minipage}\end{equation}

For suppose, by way of contradiction, that we had some relation
\begin{equation}\begin{minipage}[c]{35pc}\label{d.linrel}
$\sum_{m=1}^n c_m\,g(x^{\strt}_{J_m})\ \in\ g(k^{J_0}),$\quad
with not all $c_m$ zero.
\end{minipage}\end{equation}
If $n>\dim_k(V),$ then there must be a linear relation in $V$ among
$\leq \dim_k(V)+1$ of the $g(x^{\strt}_{J_m})\in V,$ so in that
situation we may (in working toward our contradiction) replace
the set of nuggets assumed to satisfy a relation~(\ref{d.linrel})
by a subset also satisfying
\begin{equation}\begin{minipage}[c]{35pc}\label{d.nleq}
$n\ \leq\ \dim_k(V)+1,$
\end{minipage}\end{equation}
and~(\ref{d.linrel}) by a relation which they satisfy.
Also, by dropping from our list of nuggets in~(\ref{d.linrel})
any $J_m$ such that $c_m=0,$
we may assume those coefficients all nonzero.

We now invoke for the third (and last) time the maximality
assumption~(\ref{d.max}), arguing that in the above
situation, $J_0\cup J_1\cup\dots\cup J_n$ would be a counterexample
to that maximality.

For consider any
\begin{equation}\begin{minipage}[c]{35pc}\label{d.v}
$v\ \in\ g(k^{J_0\,\cup\,J_1\,\cup\,\dots\,\cup\,J_n}).$
\end{minipage}\end{equation}
By~(\ref{d.dim1}) and our choice of $x_{J_1},\dots,x_{J_n},$ $v$ can
be written as the sum of an element of $g(k^{J_0})$ and
an element $\sum d_m\,g(x^{\strt}_{J_m})$ with $d_1,\dots,d_n\in k.$
By~(\ref{d.dim+2}) and~(\ref{d.nleq}), $\card(k)\geq\dim_k(V)+2>n,$
hence we can choose an element $c\in k$ distinct from
each of $d_1/c_1,$ $\dots\,,$ $d_n/c_n$ (for
the $c_m$ of~(\ref{d.linrel})), i.e.,
such that $d_m-c\,c_m\neq 0$ for $m=1,\dots,n.$
Thus, $\sum\,(d_m-c\,c_m)\,x^{\strt}_{J_m},$
which by~(\ref{d.linrel}) has the same
image in $V/g(k^{J_0})$ as our given element $v,$ is a linear
combination of $x^{\strt}_{J_1},\dots,x^{\strt}_{J_n}$ with
{\em nonzero} coefficients,
hence has support exactly $J_1\cup\dots\cup J_n.$
As before, we can now use~(\ref{d.J0}) to adjust this by an
element with support exactly $J_0$ so that the image under $g$ of the
resulting element $x$ is $v.$
Since $x$ has support exactly $J_0\cup J_1\cup\dots\cup J_n,$
we have the desired contradiction to~(\ref{d.max}).

It follows from~(\ref{d.indep}) that
there cannot be more than $\dim_k(V)$ disjoint nuggets;
so a maximal family of pairwise disjoint nuggets will be finite.
Let $J_1,\dots,J_n$ be such a maximal family.

In view of~(\ref{d.nuggets}), the set
$J=I-(J_0\cup J_1\cup\dots\cup J_n)$
must satisfy $g(k^J)\subseteq g(k^{J_0}),$ hence we can
enlarge $J_0$ by adjoining to it that set $J,$
without changing $g(k^{J_0}),$ and hence without losing~(\ref{d.J0}).
We then have~(\ref{d.I=}).

For $m=1,\dots,n,$
let $\U_m$ be the ultrafilter on $J_m$ described in~(\ref{d.ultra}).
To verify the final statement of the proposition, that there exists
a factorization~(\ref{d.gfactors}), note that any element
of $k^I$ can be written $a^{(0)}+a^{(1)}+\dots+a^{(n)}$ with
$a^{(m)}\in k^{J_m}$ $(m=0,\dots,n),$ hence its image under $g$ will be
congruent modulo $g(k^{J_0})$ to $g(a^{(1)})+\dots+g(a^{(n)}).$
Now the image of each $g(a^{(m)})$ modulo $g(k^{J_0})$
is a function only of the equivalence class of $a^{(m)}$ with respect to
the ultrafilter $\U_m$ (since two elements in the same equivalence
class will disagree on a subset of
$J_m$ that is $\notin\U_m,$ so that their difference
is mapped by $g$ into $g(k^{J_0})).$
Hence the value of $g(a)$ modulo $g(k^{J_0})$
is determined by the images of $a$ in the spaces $k^{J_m}/\,\U_m.$
This gives the factorization~(\ref{d.gfactors}).
The one-one-ness of the factoring map follows from~(\ref{d.indep}).
\end{proof}

To get from this a result with a simpler statement, recall that a set
$I$ admits a {\em nonprincipal} $\!\card(k)^+\!$-complete
ultrafilter only if its cardinality is greater than or equal to a
measurable cardinal $>\card(k)$ \cite[Proposition~4.2.7]{Ch+Keis}.
(We follow~\cite{Ch+Keis} in counting $\aleph_0$ as a measurable
cardinal.
Thus, we write ``uncountable measurable cardinal''
for what many authors, e.g., \cite[p.177]{Drake}, simply call a
``measurable cardinal''.)

Now uncountable measurable cardinals, if they exist at all,
must be enormously large (cf.\ \cite[Chapter~6, Corollary~1.8]{Drake}).
Hence for $k$ infinite, it is a weak restriction to assume
that $I$ is smaller than all $\!\card(k)^+\!$-complete cardinals.
Under that assumption, the $\!\card(k)^+\!$-complete ultrafilters
$\U_m$ of Lemma~\ref{L.ultra}
must be principal, determined by elements $i_m\in I;$
so each nugget $J_m$ contains a minimal nugget, the singleton $\{i_m\},$
and we may use these minimal nuggets in our
decomposition~(\ref{d.I=}).
The statement of Lemma~\ref{L.ultra} then simplifies to the next result.
(No such simplification is possible if $k$ is finite,
since then every ultrafilter is $\!\card(k)^+\!$-complete,
and the only restriction we could put on $\card(I)$ that would force
all $\!\card(k)^+\!$-complete ultrafilters to be
principal would be finiteness; an uninteresting situation.
So we now exclude the case of finite~$k.)$

\begin{corollary}\label{C.<meas}
Let $k$ be an infinite field, $I$ a set of cardinality less than
every measurable cardinal $>\nolinebreak\card(k)$
\textup{(}if any exist\textup{)}, $V$ a finite-dimensional
$\!k\!$-vector space, and $g:k^I\to V$ a $\!k\!$-linear map.
Then there exist elements $i_1,\dots,i_n\in I$
such that, writing $J_0=I-\{i_1,\dots,i_n\},$ we have
\begin{equation}\begin{minipage}[c]{35pc}\label{d.I-i*}
Every element of $g(k^{J_0})$ is the image under
$g$ of an element having support precisely $J_0.$
\end{minipage}\end{equation}

In particular, applying this to $0\in g(k^{J_0}),$
\begin{equation}\begin{minipage}[c]{35pc}\label{d.fewzeros}
There exists some $u=(u_i)\in\ker(g)$ such
that $u_i=0$ for only finitely many $i$
\textup{(}namely $i_1,\dots,i_n).$
\qed\hspace{-1.3em}
\end{minipage}\end{equation}
\end{corollary}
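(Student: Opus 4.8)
The plan is to apply Lemma~\ref{L.ultra} and then exploit the cardinality hypothesis on $I$ to force each ultrafilter it produces to be principal, at which point the abstract factorization~(\ref{d.gfactors}) collapses to an ordinary coordinate projection.

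First I would note that since $k$ is infinite while $\dim_k(V)$ is finite, the hypothesis $\card(k)\geq\dim_k(V)+2$ of~(\ref{d.dim+2}) holds automatically, so Lemma~\ref{L.ultra} applies. It supplies a decomposition $I=J_0\cup J_1\cup\dots\cup J_n$ as in~(\ref{d.I=}) in which $J_0$ satisfies~(\ref{d.J0}), together with a $\!\card(k)^+\!$-complete ultrafilter $\U_m$ on each $J_m$ $(m=1,\dots,n)$ for which the factorization~(\ref{d.gfactors}) of $\psi g:k^I\to V/g(k^{J_0})$ holds. Now I invoke the fact recalled above, that a set carrying a nonprincipal $\!\card(k)^+\!$-complete ultrafilter must have cardinality at least that of some measurable cardinal $>\card(k)$ \cite[Proposition~4.2.7]{Ch+Keis}; since $\card(J_m)\leq\card(I)$ is, by hypothesis, less than every such cardinal, each $\U_m$ is principal, say $\U_m=\{X\subseteq J_m\mid i_m\in X\}$ for some $i_m\in J_m$ (the $i_m$ distinct, the $J_m$ being pairwise disjoint). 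For a principal ultrafilter the ultrapower $k^{J_m}/\,\U_m$ is canonically $k$, the quotient map being evaluation at $i_m$; hence under that identification the first arrow of~(\ref{d.gfactors}) is simply the coordinate projection $k^I\to k^{\{i_1,\dots,i_n\}}$, and~(\ref{d.gfactors}) then says that every $a\in k^I$ with $a_{i_1}=\dots=a_{i_n}=0$ satisfies $\psi g(a)=0$, i.e.\ $g(a)\in g(k^{J_0})$. Writing $J_0'=I-\{i_1,\dots,i_n\}$, this gives $g(k^{J_0'})\subseteq g(k^{J_0})$, and since $J_0\subseteq J_0'$ we conclude $g(k^{J_0'})=g(k^{J_0})$.

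It remains to verify that $J_0'$ (the set playing the role of ``$J_0$'' in the statement of the corollary) satisfies~(\ref{d.I-i*}), and this is the only step needing a genuine, if short, argument: it is the ``support-fixing'' maneuver used repeatedly inside the proof of Lemma~\ref{L.ultra}. Given $h\in g(k^{J_0'})=g(k^{J_0})$, I would pick any $w_1\in k^{J_0'-J_0}$ with support exactly $J_0'-J_0$; then $g(w_1)\in g(k^{J_0'})=g(k^{J_0})$, so $h-g(w_1)\in g(k^{J_0})$, and property~(\ref{d.J0}) for $J_0$ produces $w_0\in k^{J_0}$ with support exactly $J_0$ and $g(w_0)=h-g(w_1)$. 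Then $w_0+w_1$ has support exactly $J_0'$ and image $h$, which is~(\ref{d.I-i*}); and~(\ref{d.fewzeros}) is just the case $h=0$, the resulting $u$ vanishing only at $i_1,\dots,i_n$. I do not anticipate any real obstacle: granting Lemma~\ref{L.ultra} and the cited set-theoretic fact, the corollary reduces to unwinding what a principal ultrafilter contributes in~(\ref{d.gfactors}) plus this one-line support adjustment.
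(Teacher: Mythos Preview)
Your proposal is correct and matches the paper's approach: apply Lemma~\ref{L.ultra}, use the cardinality hypothesis on $I$ to force each $\U_m$ to be principal at some $i_m$, and then enlarge $J_0$ to $I-\{i_1,\dots,i_n\}$. The paper phrases that last step as replacing each nugget $J_m$ by the minimal sub-nugget $\{i_m\}$ and re-invoking the decomposition, while you read it off the factorization~(\ref{d.gfactors}) and make explicit the support-fixing verification of~(\ref{d.I-i*}); the content is the same.
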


Since we have excluded the case where $k$ is finite, the
above corollary did not need condition~(\ref{d.dim+2}),
that $\card(k)\geq\dim_k(V)+2.$
We end this section with a quick example showing that
Lemma~\ref{L.ultra} does need that condition.

Let $k$ be any finite field, and $I$ a subset of $k\times k$
consisting of one nonzero element from each of the
$\card(k)+1$ one-dimensional subspaces of that
two-dimensional space (i.e., $I$ is a set of
representatives of the points of the projective line over $k).$
Let $S\subseteq k^I$ be the two-dimensional subspace consisting of the
restrictions to $I$ of all $\!k\!$-linear functionals on $k\times k.$
Since $k^I$ is $\!(\card(k){+}1)\!$-dimensional, $S$ can be expressed
as the kernel of a linear map $g$ from $k^I$ to a
$\!(\card(k){-}1)\!$-dimensional vector space $V.$

By choice of $I,$ every element of $S=\ker(g)$
has a zero somewhere on $I,$ so
$0\in g(k^I)$ is not the image under $g$ of an element having
all of $I$ for support.
Hence~(\ref{d.J0}) cannot hold with $J_0=I.$
If Lemma~\ref{L.ultra} were applicable, this would force
the existence of a nonzero number of nuggets $J_m.$
Since $I$ is finite, the associated
ultrafilters would be principal, corresponding to elements $i_m$ such
that all members of $S=\ker(g)$ were zero at $i_m$
(by the one-one-ness of the last map of~(\ref{d.gfactors})).
But this does not happen either: for every $i\in I,$
there are clearly elements of $S$ nonzero at $i.$

Hence the conclusion of Lemma~\ref{L.ultra} does not hold for this $g.$
Note that since $\dim_k(V)=\card(k)-1,$
the condition $\card(k)\geq\dim_k(V)+2$ fails by just~$1.$

\section{Back to homomorphic images of product algebras}\label{S.*PA_i->B}

From the above three results on elements with cofinite
support, we can now prove the three cases of

\begin{theorem}\label{T.main}
Assume the field $k$ is infinite, and let $(A_i)_{i\in I}$ be a family
of $\!k\!$-algebras, $B$ a $\!k\!$-algebra, and
$f:A=\prod_I A_i\to B$ a surjective $\!k\!$-algebra homomorphism.

Suppose further that either\\[.2em]
\textup{(i)}\ \ $\dim_k(B)<\card(k),$ and
$\card(I)\leq\card(k),$ or\\[.2em]
\textup{(ii)}\,\ $\dim_k(B)<2^{\aleph_0},$ and $\card(I)=\aleph_0,$
or\\[.2em]
\textup{(iii)}\ $\dim_k(B)$ is finite, and $\card(I)$ is less than
every measurable cardinal $>\card(k).$\vspace{.2em}

Then
\begin{equation}\begin{minipage}[c]{35pc}\label{d.B=}
$B\ =\ f(A_{\mathrm{fin}})+Z(B).$
\end{minipage}\end{equation}

In fact, $f$ can be written as the sum
$f_1+f_0$ of a $\!k\!$-algebra homomorphism $f_1:A\to B$ that
factors through the projection of $A$ onto the product
of finitely many of the $A_i,$ and a $\!k\!$-algebra
homomorphism $f_0:A\to Z(B).$
\end{theorem}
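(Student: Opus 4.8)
The plan is to reduce the theorem to the three cofinite-support results via Lemma~\ref{L.supp}(iii), exactly as Proposition~\ref{P.first} does in Case~(i). First I would observe that in each of the three cases, the relevant prior lemma gives, for \emph{every} $a\in A$, an element of $\ker(g_a)$ of cofinite support: in Case~(i) this is Lemma~\ref{L.poly} (using $\card(I)\le\card(k)$ and $\dim_k B<\card(k)$, which in particular covers finite $\dim_k B$—but actually we only need $\dim_k B$ finite for Lemma~\ref{L.poly}; the sharper bound $<\card(k)$ comes from Lemma~\ref{L.rat}); in Case~(ii) this is Lemma~\ref{L.JPBell}; in Case~(iii) this is Corollary~\ref{C.<meas}, specifically~(\ref{d.fewzeros}). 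Applying Lemma~\ref{L.supp}(iii) to each such $a$, we conclude $a\in f^{-1}(Z(B))+A_{\mathrm{fin}}$, i.e.\ $A=f^{-1}(Z(B))+A_{\mathrm{fin}}$, which upon applying $f$ gives~(\ref{d.B=}): $B=f(A_{\mathrm{fin}})+Z(B)$. That handles the first assertion with no real work beyond invoking the earlier results.

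The substantive part is the ``in fact'' clause: producing the splitting $f=f_1+f_0$. My plan is to first pin down finitely many indices. Since $B=f(A_{\mathrm{fin}})+Z(B)$ and $f(A_{\mathrm{fin}})=\sum_{i\in I}f(A_i)$, I want a finite $J\subseteq I$ with $B=\sum_{i\in J}f(A_i)+Z(B)$. In Case~(i) this follows from finite-dimensionality of $B$ exactly as in Proposition~\ref{P.first}. In Cases~(ii) and~(iii), $B$ need not be finite-dimensional, so I would instead argue more carefully using Corollary~\ref{C.<meas}/Lemma~\ref{L.JPBell} applied \emph{uniformly}: the point is that in those cases there is a \emph{single} cofinite set $J_0=I-\{i_1,\dots,i_n\}$ (from Corollary~\ref{C.<meas}), resp.\ a single finite bound on the exceptional coordinates coming from the map structure, that works for the map $g$ built from any given $a$. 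Actually, the cleanest route is: fix a finite $J\subseteq I$ and set $f_1=f\circ\pi_J$ where $\pi_J:A\to A$ is the projection killing all coordinates outside $J$ (equivalently, composing $A\to\prod_{i\in J}A_i$ with the inclusion), and $f_0=f-f_1$. Then $f_1$ by construction factors through the projection onto $\prod_{i\in J}A_i$. One must choose $J$ so that (a) $f_1$ is an algebra homomorphism—this is automatic since $\pi_J$ is an algebra homomorphism (the $A_i$ for $i\in J$ form a direct-product subalgebra and $\pi_J$ is idempotent multiplicative) and $f$ is one; and (b) $f_0$ maps into $Z(B)$ and is an algebra homomorphism. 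For~(b), note $f_0(a)=f(a-\pi_J(a))=f(a'')$ where $a''$ is supported on $I-J$; we need $f(a'')\in Z(B)$ for all $a''$ supported off $J$.

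So the real crux is choosing the finite set $J$ so that $f$ kills, into $Z(B)$, everything supported on its complement. This is where I expect the main obstacle to lie, and where the three cases genuinely differ. The mechanism is: for $a''$ supported on $I-J$, the map $g_{a''}$ of~(\ref{d.g_a}) has $\ker(g_{a''})$ containing an element of support all of $I-J$ (equivalently, cofinite in $I$ once we throw in arbitrary nonzero values on $J$), so Lemma~\ref{L.supp}(i) gives $f(a'')\in Z(B)$. In Case~(iii), Corollary~\ref{C.<meas} applied to $g=g_b$ for $b$ the all-ones vector on $I$ (or more robustly, the argument underlying it) produces a \emph{single} finite exceptional set $\{i_1,\dots,i_n\}$; I would take $J=\{i_1,\dots,i_n\}$, and then for any $a''$ supported on $I-J=J_0$, property~(\ref{d.I-i*}) (applied to $g_{a''}$, noting $a''$ lives in $k^{J_0}$-many coordinates) gives an element of $\ker(g_{a''})$ with support exactly $J_0$, hence $f(a'')\in Z(B)$ by Lemma~\ref{L.supp}(i). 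Wait—one subtlety: Corollary~\ref{C.<meas} produces a $J_0$ depending on the map $g_{a''}$, not a uniform one. The fix is to instead apply it once to the single map $g:k^I\to B'$ where $B'$ is a finite-dimensional subspace of $B$ chosen large enough; but $B$ need not be finite-dimensional in Cases~(ii),~(iii). The correct resolution, which I would carry out, is: $B=f(A_{\mathrm{fin}})+Z(B)=\sum_i f(A_i)+Z(B)$, and since $f(A_i)f(A_j)=0$ for $i\ne j$, the product structure of $B$ modulo $Z(B)$ is supported on the $f(A_i)$; a finiteness argument (distinct from mere dimension) shows only finitely many $f(A_i)$ are nonzero modulo $Z(B)$—precisely, if infinitely many were, one builds an $a$ with infinite support whose $g_a$ has no cofinite-support kernel element, contradicting the relevant lemma. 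Take $J$ to be that finite set; then every $a''$ supported off $J$ has $f(A_i)\subseteq Z(B)$ for each $i\in\supp(a'')$, giving $f(a'')\in Z(B)$ directly, and also showing $f_0$ is a homomorphism since its image lies in $Z(B)$ where all products vanish. This finiteness step is the heart of the matter; once it is in hand, verifying that $f_1$ and $f_0$ are homomorphisms and that $f_1+f_0=f$ is routine.
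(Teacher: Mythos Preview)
Your overall strategy matches the paper's: establish~(\ref{d.B=}) via Lemma~\ref{L.supp}(iii) together with Lemma~\ref{L.rat}, Lemma~\ref{L.JPBell}, or Corollary~\ref{C.<meas} as appropriate, then identify $I_1=\{i\in I : f(A_i)\not\subseteq Z(B)\}$, show it is finite, and split $f$ via the projections onto $\prod_{I_1}A_i$ and $\prod_{I-I_1}A_i$. Your finiteness argument for $I_1$ is correct in outline but hides the key computation: if $u\in\ker(g_a)$ has cofinite support (where $a$ is built from chosen $a_i\in A_i$ with $f(a_i)\notin Z(B)$), then Lemma~\ref{L.supp}(ii) gives $f(a')\in Z(B)$ for $a'=a|_{\supp(u)}$, and one must then argue, exactly as in~(\ref{d.ai0x}), that for $i_1\in I_1\cap\supp(u)$ and any $b=f(x)$ one has $f(a_{i_1})\,b=f(a'\,x_{i_1})=f(a')f(x_{i_1})=0$, forcing $f(a_{i_1})\in Z(B)$, a contradiction. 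The paper runs the same computation, organized instead as: use~(\ref{d.B=}) to write $f(a)=f(a')+z$ with $a'\in A_{\mathrm{fin}}$, then show $I_1\subseteq\supp(a')$.

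The genuine gap is your final step. You claim that for $a''$ supported on $I-I_1$, the coordinatewise inclusion $f(A_i)\subseteq Z(B)$ (for $i\in\supp(a'')$) gives $f(a'')\in Z(B)$ ``directly''. It does not: $a''$ lies in the \emph{product} $\prod_{I-I_1}A_i$, not the direct sum, so $f(a'')$ is not any kind of sum of the $f(a''_i)$, and there is no reason $Z(B)$ should be closed under whatever operation would be needed. The paper's argument is different: $\prod_{I_1}A_i$ and $\prod_{I-I_1}A_i$ annihilate each other in $A$, hence $f_1(A)$ and $f_0(A)$ annihilate each other in $B$; and since $B=f(A_{\mathrm{fin}})+Z(B)\subseteq f_1(A)+Z(B)$ (the summands $f(A_i)$ with $i\notin I_1$ being absorbed into $Z(B)$), it follows that $f_0(A)$ annihilates all of $B$, whence $f_0(A)\subseteq Z(B)$. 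An alternative fix closer to your line: re-apply Lemma~\ref{L.supp}(iii) to $a''$ itself, obtaining $a''=c'+c''$ with $f(c')\in Z(B)$ and $c''\in A_{\mathrm{fin}}$ supported inside $\supp(a'')\subseteq I-I_1$; then $f(c'')$ is a \emph{finite} sum of elements of $Z(B)$, so $f(a'')\in Z(B)$. Either way, the inference is not immediate from the coordinatewise inclusion. (Minor: you have the case labels confused---$B$ is assumed finite-dimensional in case~(iii), not~(i).)
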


\begin{proof}
We see~(\ref{d.B=}) by combining Lemma~\ref{L.supp}(iii)
with Lemma~\ref{L.rat} in case~(i),
with Lemma~\ref{L.JPBell} in case~(ii), and
with Corollary~\ref{C.<meas} in case~(iii).
The remainder of the proof will be devoted to establishing
the final assertion.

In doing so, let us identify each
algebra $A_{i_0}$ $(i_0\in I)$ with the subalgebra of $A$ consisting
of elements with support in $\{i_0\}.$
In particular, given $a=(a_i)\in A$ and $i_0\in I,$ the component
$a_{i_0}\in A_{i_0}$ will also be regarded as an element of $A.$

As in the proof of Proposition~\ref{P.first},
$f(A_{\mathrm{fin}})=\sum_I f(A_i);$ but since not all of our
alternative hypotheses~(i)--(iii)
have $B$ finite-dimensional, we need a new argument
to show that only finitely many of these
summands are needed in~(\ref{d.B=}).
In fact, we shall show that the set
\begin{equation}\begin{minipage}[c]{35pc}\label{d.I1}
$I_1\ =\ \{i\in I\mid f(A_i)\not\subseteq Z(B)\}$
\end{minipage}\end{equation}
is finite.
To this end, let us choose for each $i\in I_1$ an $a_i\in A_i$
such that $f(a_i)\notin Z(B),$ and
let $a=(a_i)_{i\in I_1}\in \prod_{I_1} A_i\subseteq\prod_I A_i.$
By~(\ref{d.B=}), there exist
$a'\in A_{\mathrm{fin}}$ and $z\in Z(B)$ such that
\begin{equation}\begin{minipage}[c]{35pc}\label{d.fa'+z}
$f(a)\ =\ f(a')+z.$
\end{minipage}\end{equation}
We claim that $I_1\subseteq\supp(a').$
Indeed, consider any $i_1\in I_1.$
Since $f(a_{i_1})\notin Z(B),$ we can find some
element of $B,$ which we write $f(x),$ where $x=(x_i)\in A,$ such
that either $f(x)f(a_{i_1})\neq 0$ or $f(a_{i_1})f(x)\neq 0.$
Without loss of generality, assume the latter inequality.
Then
\begin{equation}\begin{minipage}[c]{35pc}\label{d.ai0x}
$0\ \neq\ f(a_{i_1})f(x)\ =\ f(a_{i_1}x)\ =\ f(a_{i_1}x_{i_1})\ =
\ f(a\,x_{i_1})\\[.3em]
\strt\hspace{3em}\ =\ f(a)f(x_{i_1})\ =\ (f(a')+z)f(x_{i_1})
\ =\ f(a'x_{i_1})\ =\ f(a'_{i_1}x_{i_1}).$
\end{minipage}\end{equation}
Hence $a'_{i_1}\neq 0,$ i.e., $i_1\in\supp(a').$
So $I_1$ is contained in $\supp(a'),$ and so is finite.

Now let $f_1,$ respectively $f_0,$ be the maps
$A\to B$ given by projecting $A$ to its subalgebra $\prod_{I_1} A_i,$
respectively $\prod_{I-I_1} A_i,$ and then applying $f.$
Thus, these are homomorphisms satisfying $f=f_1+f_0.$
Since $\prod_{I_1} A_i$ and $\prod_{I-I_1} A_i$ annihilate
each other in either order in $A,$ the same is true of
the images of $f_1$ and $f_0$ in $B.$
Now~(\ref{d.B=}), adjusted in the light of~(\ref{d.I1}), says that
$B=f_1(A)+Z(B).$
Since $f_0(A)$ annihilates both summands in this expression,
it is contained in $Z(B),$ as claimed.
\end{proof}

We remark, in connection with the decomposition $f=f_1+f_0,$
that though the sum of two algebra homomorphisms is usually not
a homomorphism, it is if the images annihilate one another;
in particular, if one of those images is contained in the total
annihilator ideal of the codomain.
(Cf.~\cite[Lemma~4]{prod_Lie1}.)

For related results on homomorphic images of
{\em inverse limits} of nilpotent algebras $A_i,$ see \cite{pro-np},
\cite{pro-np2}.

\section{Applications to Lie algebras}\label{S.Lie}

We record in this section some consequences of Theorem~\ref{T.main}
for Lie algebras.
Note that for $B$ a Lie algebra, our definition of $Z(B)$
describes what is called the {\em center} of $B,$
and regularly denoted by that symbol.

\begin{theorem}[{cf.\ \cite[Theorems 21 and 22]{prod_Lie1}}]\label{T.svbl,np}
Let $k$ be an infinite field, let $B$ be a Lie algebra and
$(L_i)_{i\in I}$ a family of Lie algebras over $k,$ and let
$f:L=\prod_I L_i\to B$ be a surjective homomorphism of Lie algebras.
Suppose also that one of the three conditions
\textup{(i)--(iii)} of Theorem~\ref{T.main} relating
$\card(I),$ $\card(k),$ and $\dim_k(B)$ holds.

Then if all $L_i$ are {\em solvable}, respectively, {\em nilpotent},
$B$ is as well, and is in fact the sum of its center $Z(B)$ and
the \textup{(}mutually centralizing\textup{)} images under $f$ of
finitely many of the $L_i.$
\end{theorem}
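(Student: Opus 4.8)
The plan is to invoke Theorem~\ref{T.main} to reduce $B$ to the form $f_1+f_0$, and then track how the solvability or nilpotence hypothesis on the $L_i$ descends through the finite product $\prod_{I_1}L_i$ and then to $B$. First I would apply Theorem~\ref{T.main}: under hypothesis (i), (ii), or (iii), we obtain a decomposition $f=f_1+f_0$, where $f_1$ factors through the projection $L\to\prod_{m=1}^r L_{i_m}$ for some finite $\{i_1,\dots,i_r\}\subseteq I$, and where $f_0(L)\subseteq Z(B)$. Writing $B_1=f_1(L)=f\bigl(\prod_{m=1}^r L_{i_m}\bigr)$, we have $B=B_1+Z(B)$, and the summands mutually centralize one another (indeed $Z(B)$ centralizes everything). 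So it remains only to show: (a) $B_1$ is solvable (resp.\ nilpotent) when each $L_i$ is; and (b) adjoining the central summand $Z(B)$ preserves solvability (resp.\ nilpotence).

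For (a), note that a finite direct product $\prod_{m=1}^r L_{i_m}$ of solvable (resp.\ nilpotent) Lie algebras is solvable (resp.\ nilpotent): the derived series (resp.\ lower central series) of a finite product is the product of the derived series (resp.\ lower central series) of the factors, so it terminates in finitely many steps if each factor's does. Homomorphic images of solvable (resp.\ nilpotent) Lie algebras are solvable (resp.\ nilpotent), since $f_1$ carries the $n$-th derived algebra (resp.\ lower central term) onto that of the image. Hence $B_1$ is solvable (resp.\ nilpotent). For (b), recall that if $B=B_1+C$ with $C$ a central ideal and $B_1$ solvable (resp.\ nilpotent), then $B$ is solvable (resp.\ nilpotent): in the solvable case, $[B,B]=[B_1,B_1]\subseteq B_1$, so the derived series of $B$ from the second term on coincides with that of $B_1$ (or an ideal thereof) and terminates; in the nilpotent case, $[B,B]\subseteq B_1$ again, and more generally every term of the lower central series of $B$ past the first lies inside the corresponding term for $B_1$, which reaches $\{0\}$. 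Thus $B$ itself is solvable (resp.\ nilpotent).

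Combining these, $B$ is solvable (resp.\ nilpotent), and $B=Z(B)+B_1$ where $B_1$ is the sum of the images $f(L_{i_1}),\dots,f(L_{i_r})$ of finitely many of the $L_i$, which mutually centralize each other because the corresponding subalgebras $L_{i_m}\subseteq L$ do (they have disjoint, indeed singleton, supports). This is exactly the asserted conclusion.

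I expect no serious obstacle here: the entire content is packaged in Theorem~\ref{T.main}, and the rest is the routine observation that solvability and nilpotence of Lie algebras are inherited by finite products, by homomorphic images, and by central extensions. The only point requiring a word of care is that one must use the \emph{structured} conclusion of Theorem~\ref{T.main} (the splitting $f=f_1+f_0$ with $f_1$ factoring through finitely many coordinates), not merely the bare equation $B=f(A_{\mathrm{fin}})+Z(B)$, in order to pin down that the relevant images come from \emph{finitely many} of the $L_i$.
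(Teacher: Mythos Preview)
Your proposal is correct and follows essentially the same route as the paper. The paper compresses your steps (a) and (b) into the single observation that a Lie algebra spanned by finitely many mutually centralizing solvable (resp.\ nilpotent) subalgebras---here the $f(L_{i_m})$ together with $Z(B)$---is itself solvable (resp.\ nilpotent); but this is exactly the content you spell out in more detail.
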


\begin{proof}
The part of the conclusion after ``{\it $B$ is as well}\/'' comes
directly from Theorem~\ref{T.main}.
The preceding part follows
because a Lie algebra spanned by finitely many mutually
centralizing Lie subalgebras which are solvable or nilpotent
(as are both the $f(L_i)$ and $Z(B))$
is again solvable, respectively, nilpotent.
\end{proof}

If, instead of looking at nilpotent or solvable Lie algebras
as in Theorem~\ref{T.svbl,np},
we assume the $L_i$ simple, the situation
is not as straightforward.
Let us call a general (not necessarily Lie) algebra {\em simple}
if it has nonzero multiplication, and has no nonzero proper
homomorphic image.
A simple algebra $A$ is necessarily idempotent,
i.e., satisfies $AA = A$ (where by $AA$ we mean the span of the set
of pairwise products of elements of $A),$ and has $Z(A)=\{0\}.$
As noted in \cite[Lemma~23]{prod_Lie1}, an infinite direct
product $A$ of algebras $A_i$ which are each idempotent
is itself idempotent if and only if there is an integer $n$ such that
in all but finitely many of the $A_i,$
every element can be written as a sum of $\leq n$ products.
When no such $n$ exists, so that $AA$ is a proper ideal
of $A,$ then $A$ has the nonzero homomorphic
image $A/AA$ with zero multiplication,
and this is clearly not a direct product of simple algebras.
So for Lie algebras, in the latter situation, a description
of the general homomorphic image $B$ of $A$ under the
conditions of Theorem~\ref{T.main} must combine
a direct product of simple algebras and an abelian factor.

But do there exist {\em finite-dimensional}
simple Lie algebras that require unboundedly many
brackets to represent their general element?
Probably not.
It follows from the result of \cite{Brown}
(or \cite[Ch.VIII, \S11, Exercise~13(b)]{Bourbaki}) that in a simple
Lie algebra over $\mathbb{C}$ (or any algebraically closed field of
characteristic~$0),$ every element can be written as a single bracket.
No example seems to be
known of an element of a finite-dimensional simple
Lie algebra over any field $k$ which cannot be so represented, though
even for $k=\mathbb{R},$ the most one knows at present is that every
element is a sum of two brackets (\cite[Corollary~A3.5, p.653]{KHH+SAM},
\cite[fourth paragraph of~\S9]{prod_Lie1}).

However, using the recent
result \cite[Theorem~A]{Bois} that every finite-dimensional
simple Lie algebra over an algebraically closed field $k$
of characteristic not~$2$ or~$3$ can be {\em generated} by two elements,
we showed in \cite[Theorem~26]{prod_Lie1} that over any
infinite field $k$ (not necessarily algebraically closed)
of characteristic not~$2$ or~$3,$ every finite-dimensional
simple Lie algebra $L$ contains two elements
$x_1$ and $x_2$ such that $L=[x_1,L]+[x_2,L].$
(For related results, cf.~\cite{NN_split}.)
Combining this fact with Theorem~\ref{T.main} above, we get

\begin{theorem}[{cf.\ \cite[Theorem~27]{prod_Lie1}}]\label{T.simple}
Let $k$ be any infinite field of characteristic not~$2$ or~$3.$
Let $B$ be a Lie algebra over $k,$ $(L_i)_{i\in I}$ a family of
finite-dimensional simple Lie algebras over $k,$
and $f:L=\prod_I L_i\to B$ a surjective homomorphism of Lie algebras.
Suppose, moreover, that one of the three conditions
\textup{(i)--(iii)} of Theorem~\ref{T.main} relating
$\card(I),$ $\card(k),$ and $\dim_k(B)$ holds.

Then $f$ factors as
$\prod_I L_i\to L_{i_1}\times\dots\times L_{i_n}\cong B$
for some $i_1,\dots,i_n\in I,$
where the arrow represents the natural projection.
Thus, $B$ is finite-dimensional, and is the direct product of the
images under $f$ of finitely many of the~$L_i.$
\end{theorem}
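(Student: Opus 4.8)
The plan is to start from the conclusion of Theorem~\ref{T.main}, which we may invoke since one of the hypotheses (i)--(iii) holds. That theorem tells us that $f$ decomposes as $f_1+f_0$, where $f_1$ factors through the projection $L\to L_{i_1}\times\dots\times L_{i_n}$ onto finitely many factors, and $f_0$ maps $L$ into the center $Z(B)$. Equivalently, writing $I_1=\{i_1,\dots,i_n\}$ for the (finite) set from the proof of Theorem~\ref{T.main} and $L'=\prod_{I_1}L_i=L_{i_1}\times\dots\times L_{i_n}$, we have $B=f(L')+Z(B)$, the two summands centralizing each other. The first key step is to exploit that each $L_i$ is simple: a simple algebra satisfies $L_i L_i=L_i$, so $L'$ is idempotent (being a \emph{finite} product of idempotent algebras), hence $[L',L']=L'$ and therefore $f(L')=[f(L'),f(L')]\subseteq[B,B]$. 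Since $Z(B)$ is centralized by $f(L')$ and, being the center, is centralized by all of $B$, we get $[B,B]=[f(L'),f(L')]=f([L',L'])=f(L')$; in particular $f(L')$ is an ideal of $B$ and $B=f(L')+Z(B)$ with $f(L')\cap Z(B)$ contained in the center of the algebra $f(L')$.

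The second key step is to analyze the restriction $f|_{L'}\colon L'\to f(L')$. Its kernel is an ideal of $L'=L_{i_1}\times\dots\times L_{i_n}$; since each $L_{i_j}$ is simple, every ideal of this finite product is a sub-product $\prod_{j\in S}L_{i_j}$ for some $S\subseteq\{1,\dots,n\}$. Discarding those $L_{i_j}$ that lie in the kernel (and renaming), we may assume $f|_{L'}$ is injective, so $f(L')\cong L'$ is a finite product of finite-dimensional simple Lie algebras; in particular it has trivial center, so $f(L')\cap Z(B)=\{0\}$ and the sum $B=f(L')\oplus Z(B)$ is direct as vector spaces, and since $f(L')$ is an ideal and $Z(B)$ is central, it is a direct product of Lie algebras $B\cong f(L')\times Z(B)$.

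The third and decisive step is to show $Z(B)=\{0\}$, so that $B\cong f(L')$ and $f$ is (up to the projection) the isomorphism we want. Here I would use the stronger generation property cited just before the theorem: over an infinite field of characteristic not $2$ or $3$, each finite-dimensional simple $L_i$ contains $x_1^{(i)},x_2^{(i)}$ with $L_i=[x_1^{(i)},L_i]+[x_2^{(i)},L_i]$. One forms the two elements $\xi_t=(x_t^{(i)})_{i\in I}\in L=\prod_I L_i$ (for $t=1,2$), and checks that $[\xi_1,L]+[\xi_2,L]=\prod_I L_i=L$, because this identity holds coordinatewise in every factor. Applying the surjection $f$ gives $B=f(L)=[f(\xi_1),B]+[f(\xi_2),B]\subseteq[B,B]$, so $B=[B,B]=f(L')$ by the computation of the first step, whence $Z(B)\subseteq f(L')\cap Z(B)=\{0\}$. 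This also re-proves, more cleanly, that only finitely many factors survive: $f$ kills $\prod_{I-I_1}L_i$ since $f(\prod_{I-I_1}L_i)$ centralizes $f(L')=B$ and hence lies in $Z(B)=\{0\}$, so $f$ factors through $L\to L'$ and then through the injective $L'\to L_{i_1}\times\dots\times L_{i_n}$ after discarding redundant factors, giving the asserted factorization $\prod_I L_i\to L_{i_1}\times\dots\times L_{i_n}\cong B$, with $B$ visibly finite-dimensional.

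I expect the main obstacle to be purely bookkeeping rather than conceptual: keeping straight the two facts that $f(L')$ is an ideal and $Z(B)$ is central, so that the decompositions $f=f_1+f_0$ and $B=f(L')+Z(B)$ genuinely upgrade to \emph{direct-product} decompositions, and making sure the ``discard redundant factors'' step is stated so that the final projection $\prod_I L_i\to L_{i_1}\times\dots\times L_{i_n}$ has the $i_j$ exactly those indices whose factor is \emph{not} killed. The only input beyond Theorem~\ref{T.main} is the two-element generation-by-brackets property of finite-dimensional simple Lie algebras over infinite fields of characteristic $\neq 2,3$, which the excerpt has already supplied; everything else is elementary structure theory of finite products of simple algebras.
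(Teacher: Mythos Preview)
Your proposal is correct and uses exactly the paper's two ingredients: Theorem~\ref{T.main} and the uniform two-bracket generation property $L_i=[x_1^{(i)},L_i]+[x_2^{(i)},L_i]$. The paper's argument is more streamlined in that it \emph{leads} with your third step---from $L=[\xi_1,L]+[\xi_2,L]$ one gets $B=[B,B]$ at once, whence $B=[B,B]=[f(L_{\mathrm{fin}})+Z(B),\,f(L_{\mathrm{fin}})+Z(B)]\subseteq f(L_{\mathrm{fin}})$ and $Z(B)=\{0\}$ follow immediately---so your first two steps, while correct, are not needed.
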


\begin{proof}
By~\cite[Theorem~26]{prod_Lie1},
quoted above, every element of each of the $L_i$
is a sum of at most two brackets, hence the
same is true in $L,$ hence in $B.$
In particular, $B$ is idempotent (in Lie algebra
language, perfect): $B=[B,B].$
Combining this with Theorem~\ref{T.main}, we get
\begin{equation}\begin{minipage}[c]{35pc}
$B\ =\ [B,B]\ =\ [f(A_\mathrm{fin})+Z(B),\,f(A_\mathrm{fin})+Z(B)]\ =
\ [f(A_\mathrm{fin}),\,f(A_\mathrm{fin})]\ \subseteq
\ f(A_\mathrm{fin}).$
\end{minipage}\end{equation}
Hence $B=f(A_\mathrm{fin}),$ so it is a sum of the mutually
annihilating images of the simple Lie algebras $L_i;$
in particular, $Z(B)=\{0\}.$
Combining this with the final assertions of
Theorem~\ref{T.main} gives the desired conclusions.
\end{proof}

\section{Algebras over valuation rings}\label{S.KRk}

Can we extend our results to
more general commutative base rings than fields?

If $R$ is an integral domain with field of fractions $k,$ and
$f:\prod_I A_i\to B$ a homomorphism of $\!R\!$-algebras, and
we assume that the $A_i$ and $B$ are torsion-free
as $\!R\!$-modules, we might hope that
by extending scalars to the field of fractions $k$ of $R,$
and applying the preceding results to the extended map, we
could get similar conclusions about $f.$
Unfortunately, $(\prod_I A_i)\otimes_R k$ is in general much smaller
than $\prod_I (A_i\otimes_R k):$ the former can be identified
with the subalgebra of the latter consisting of those elements
whose components admit a common denominator in $R.$
So though a homomorphism $\prod_I A_i\to B$ induces a
homomorphism $(\prod_I A_i)\otimes_R k\to B\otimes_R k,$
there is no reason to expect this to extend to a homomorphism on
$\prod_I (A_i\otimes_R k),$ to which
we might apply Theorem~\ref{T.main}.

If instead we try to generalize the
results that go into the proof of Theorem~\ref{T.main}, we find
it is not hard to extend the proofs of Lemmas~\ref{L.poly},~\ref{L.rat}
and~\ref{L.JPBell} to show that the
kernel of a map from $R^I$ to a free $\!R\!$-module, or even to
a $\!k\!$-vector-space, of appropriate dimension, contains
elements with all but finitely many coordinates nonzero.
But that is not enough: the obvious analog
of Lemma~\ref{L.supp}(iii) requires $(u_i)$ to have all
but finitely many coordinates {\em invertible}.
Let us prove a slightly stronger analog of that lemma,
which uses a condition intermediate
between ``all nonzero'' and ``all invertible'', namely
``having a nonzero common multiple''.

\begin{lemma}\label{L.supp2}
Suppose $R$ is an integral domain,
$(A_i)_{i\in I}$ a family of $\!R\!$-algebras,
$B$ an $\!R\!$-algebra that is torsion-free as an $\!R\!$-module,
$f:A=\prod_{i\in I}A_i\to B$ a surjective algebra homomorphism,
and $a=(a_i)_{i\in I}$ an element of $A;$ and consider the
$\!R\!$-module homomorphism
\begin{equation}\begin{minipage}[c]{35pc}\label{d.g2}
$g_a:R^I\to B$\quad defined by\quad $g_a((u_i))=f((u_i a_i))$\quad
for all\quad $(u_i)\in R^I.$
\end{minipage}\end{equation}

Then\\[.2em]
\textup{(i)}~ If $\ker(g_a)$ contains an element
$u=(u_i)_{i\in I}$ whose entries $u_i$ have a nonzero
common multiple $r\in R,$ then $f(a)\in Z(B).$\\[.2em]
\textup{(ii)}~ More generally, given $u\in\ker(g_a)$ such that
the entries of $u$ that are nonzero admit a nonzero common multiple
$r\in R,$ if we write $a=a'+a'',$ where $\supp(a')\subseteq\supp(u)$
and $\supp(a'')\subseteq I-\supp(u),$ then $f(a')\in Z(B).$\\[.2em]
\textup{(iii)}~ Hence, if $\ker(g_a)$ contains an element
all but finitely many of whose entries are invertible,
then $a$ is the sum of an
element $a'\in f^{-1}(Z(B))$ and an element $a''\in A_\mathrm{fin}.$
\end{lemma}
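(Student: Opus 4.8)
The plan is to follow the template of Lemma~\ref{L.supp}, adapting the multiplicative reindexing trick to the weaker divisibility hypothesis available over an integral domain. For part~(i), suppose $u=(u_i)\in\ker(g_a)$ and $r\in R$ is a nonzero common multiple of the $u_i$, say $r=u_i v_i$ for suitable $v_i\in R$. Given any $b=f(x)\in B$ with $x=(x_i)\in A$, I would compute
\[
r\,\bigl(f(a)\,b\bigr)\ =\ f\bigl((r\,a_i x_i)\bigr)\ =\ f\bigl((u_i a_i)\,(v_i x_i)\bigr)\ =\ f\bigl((u_i a_i)\bigr)\,f\bigl((v_i x_i)\bigr)\ =\ g_a(u)\,f\bigl((v_i x_i)\bigr)\ =\ 0,
\]
using that $r=u_i v_i$ for every $i$, that $f$ is multiplicative, and that $g_a(u)=0$. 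Since $B$ is torsion-free over $R$ and $r\neq0$, this forces $f(a)\,b=0$; the mirror computation with the factors reversed gives $b\,f(a)=0$. Hence $f(a)\in Z(B)$. The torsion-freeness hypothesis on $B$ is exactly what lets me cancel the spurious factor $r$ that the divisibility argument introduces, and this is the one genuinely new ingredient compared with the field case.

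For part~(ii), I would mimic the proof of Lemma~\ref{L.supp}(ii): define $u'\in R^I$ by $u'_i=u_i$ for $i\in\supp(u)$ and $u'_i=1$ for $i\notin\supp(u)$. Then $u'a'=ua$ (since $a'$ is supported inside $\supp(u)$ and the extra $1$'s of $u'$ act on coordinates where $a'$ vanishes), so $g_{a'}(u')=f(u'a')=f(ua)=0$. The nonzero entries of $u'$ are the nonzero $u_i$ together with the value~$1$, and these still have $r$ (or $r$ times a unit, in any case a nonzero element) as a common multiple, so part~(i) applied to $a'$ and $u'$ yields $f(a')\in Z(B)$.

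For part~(iii), suppose $u\in\ker(g_a)$ has $u_i$ invertible for all $i$ outside a finite set $F$. Write $a=a'+a''$ with $\supp(a')\subseteq I-F$ and $\supp(a'')\subseteq F\subseteq A_{\mathrm{fin}}$. The nonzero entries of $u$ lying in positions outside $F$ are all units, hence have common multiple~$1\neq0$; so after discarding (as in part~(ii)) the coordinates inside $F$, part~(ii) gives $f(a')\in Z(B)$, i.e.\ $a'\in f^{-1}(Z(B))$. This exhibits $a=a'+a''$ in the desired form. I do not expect any serious obstacle here: the only subtlety is bookkeeping the set $\supp(u)$ versus the finite exceptional set $F$, and making sure the "common multiple" condition is preserved when one adjoins finitely many unit entries, both of which are straightforward.
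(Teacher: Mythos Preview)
Your arguments for (i) and (ii) are correct and coincide with the paper's (which is stated tersely, simply pointing to the computation with the coefficients $r\,u_i^{-1}\in R$ in place of $u_i^{-1}$, and then invoking torsion-freeness to cancel $r$).

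For (iii) there is a small but real slip. You decompose $a$ along the finite set $F$ of non-invertible coordinates, but the hypothesis of~(ii) requires the decomposition to be along $\supp(u)$, and $F$ may be strictly larger than $I-\supp(u)$: any $i\in F$ with $u_i$ a nonzero non-unit lies in $\supp(u)$, not outside it. Your phrase ``discarding the coordinates inside $F$'' would amount to replacing $u$ by the element that agrees with $u$ off $F$ and is zero on $F$, but that modified element need not lie in $\ker(g_a)$, so (ii) does not apply to it. The paper's route---which your closing sentence gestures toward but does not carry out---is instead to verify the hypothesis of~(ii) for $u$ itself: the nonzero entries of $u$ consist of units together with finitely many nonzero non-units, and the product of the latter is a nonzero common multiple of all of them. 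Then~(ii) applies directly, giving $f(a')\in Z(B)$ with $a''$ supported on $I-\supp(u)\subseteq F$, hence $a''\in A_{\mathrm{fin}}$.
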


\begin{proof}
(i) is proved like assertion~(i) of Lemma~\ref{L.supp}, except
that where we obtained $f(a)\,b=0$ by a computation involving
the coefficients $u_i^{-1}\in k,$ we now
prove $r\,f(a)\,b=0,$ by a computation involving
the coefficients $r\,u_i^{-1}$ (which lie in $R$ by choice of $r).$
Since $B$ is torsion-free, the relation $r\,f(a)\,b=0$
then implies $f(a)\,b=0,$ as required.
One gets $b\,f(a)=0$ in the same way, and
deduces~(ii) from~(i) as before.

To get~(iii) from~(ii) we need to see that the hypothesis of~(iii)
implies that the nonzero entries of $(u_i)$
have a nonzero common multiple.
Such a common multiple is equivalent to a nonzero common multiple
of those of the nonzero entries that are not invertible.
By assumption, there are only finitely many of these, so
they have such a common multiple (e.g., their product).
\end{proof}

We shall now show that if $R$ is a commutative valuation ring, we can,
under appropriate conditions, use some of our earlier results
on linear maps on $k^I$ to get elements $u\in R^I$ as in~(iii) above.
Because of the way we will relate
our results to those of the preceding sections, the symbol $k$
will be used below for the residue field of $R,$
rather than its field of fractions.
The following lemma will be our tool for converting to our present
goal certain of our earlier results,
namely, those that give subspaces of $k^I$ whose
nonzero members have almost all components nonzero.

\begin{lemma}\label{L.aeunit}
Let $R$ be a commutative valuation ring, $K$ its field of fractions,
$k$ its residue field, $I$ a set, and $\lambda$ a cardinal.
Suppose that $k^I$ has a subspace $W$ of dimension $\lambda,$ every
nonzero element of which has cofinite support.

Then if $g$ is an $\!R\!$-linear map from $R^I$ to a
$\!K\!$-vector-space $V$ of dimension $<\lambda,$ there
exists an element $u\in\ker(g)$ all but finitely many of
whose entries are invertible in $R.$
\end{lemma}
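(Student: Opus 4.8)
The plan is to lift the subspace $W\subseteq k^I$ to a subset of $R^I$, push it through $g$ into the $\!K\!$-vector-space $V$, find a nontrivial $\!K\!$-linear dependence (since $\dim_K V<\lambda$), clear denominators to get an $\!R\!$-linear dependence, and then argue that a suitably chosen element of that dependence is the desired $u$. The valuation-ring hypothesis enters at two points: it lets us choose coset representatives in a way compatible with the valuation, and it guarantees that, among the $R$-coefficients appearing in the dependence, one of them divides all the others—so we can normalize that coefficient to $1$ and read off that almost all entries of $u$ are units.

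In detail: first I would fix a basis $(\bar w_\xi)_{\xi<\lambda}$ of $W$ and, for each $\xi$, lift $\bar w_\xi$ to an element $w_\xi\in R^I$ by choosing, coordinatewise, a representative in $R$ of each residue in $k$; since $\bar w_\xi$ has cofinite support, all but finitely many coordinates of $w_\xi$ are units of $R$ (a representative of a nonzero residue is a unit, because in a valuation ring the non-units form an ideal whose quotient is $k$). Applying $g$ gives $\lambda$ elements $g(w_\xi)\in V$; since $\dim_K V<\lambda$, there is a finite set $\xi_1<\dots<\xi_d$ and scalars in $K$, not all zero, giving a $\!K\!$-linear relation among $g(w_{\xi_1}),\dots,g(w_{\xi_d})$. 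Clearing denominators, I may take the scalars to be elements $r_1,\dots,r_d\in R$, not all zero. In a valuation ring the ideals are totally ordered, so among the nonzero $r_j$ there is one, say $r_{j_0}$, that divides all the others; set $c_j=r_j/r_{j_0}\in R$, so $c_{j_0}=1$ and $\sum_j c_j\,g(w_{\xi_j})=0$ in $V$ after dividing the $R$-relation by $r_{j_0}$ (this uses torsion-freeness of $V$ over $R$, which holds since $V$ is a $\!K\!$-vector-space). Then $u=\sum_{j=1}^d c_j\,w_{\xi_j}\in R^I$ lies in $\ker(g)$.

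It remains to see that all but finitely many entries of $u$ are invertible in $R$. Reducing mod the maximal ideal, $\bar u=\sum_j \bar c_j\,\bar w_{\xi_j}$, and since $\bar c_{j_0}=1\neq 0$ in $k$, this is a nontrivial $\!k\!$-linear combination of the $\bar w_{\xi_j}$, hence a nonzero element of $W$, hence has cofinite support. So $u$ has a unit entry at all but finitely many $i\in I$ (at each $i$ where $\bar u_i\neq0$, the entry $u_i$ is a representative of a nonzero residue, hence a unit), which is exactly the conclusion.

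The step I expect to be the main obstacle—or at least the one requiring the most care—is the normalization $c_{j_0}=1$: it is crucial that a valuation ring's ideals are linearly ordered so that one of the finitely many coefficients $r_j$ divides all the rest; over a general domain one could only clear denominators, not arrange a unit coefficient, and then the reduction $\bar u$ might vanish. The rest is routine: the lifting of $W$, the dimension count forcing a dependence, and the torsion-free cancellation are all standard.
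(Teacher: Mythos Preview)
Your proof is correct and is essentially the same as the paper's: lift a basis of $W$ to $R^I$, use $\dim_K V<\lambda$ to get a nontrivial $K$-linear dependence among finitely many $g(w_{\xi_j})$, normalize the coefficients (using the valuation-ring structure) so that they all lie in $R$ with at least one a unit, and then reduce modulo the maximal ideal to land in $W\setminus\{0\}$. The paper phrases the normalization as ``multiply by a suitable element of $K$ so that all coefficients lie in $R$ but not all in the maximal ideal,'' which is exactly your two-step ``clear denominators, then divide by the $r_{j_0}$ of least valuation''; the conclusion that $\bar u$ is a nontrivial $k$-combination of basis vectors of $W$, hence has cofinite support, is identical.
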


\begin{proof}
Let $c\mapsto\overline{c}$ be the residue map $R\to k,$ and for
$u=(u_i)\in R^I,$ let us similarly write
$\overline{u}$ for $(\,\overline{u_i}\,)\in k^I.$
If we take a basis of the
subspace $W\subseteq k^I$ indexed by $\lambda,$ and lift
its elements to $R^I,$ we get a $\!\lambda\!$-tuple of elements
$u^{(\alpha)}=(u_i^{(\alpha)})\in R^I$ $(\alpha\in\lambda)$ whose images
$\overline{u^{(\alpha)}}\in k^I$ have the property that every
nontrivial $\!k\!$-linear combination of them has all but
finitely many entries nonzero.

Since $V$ is $\!<\lambda\!$-dimensional,
some nontrivial $\!K\!$-linear relation
$\sum_{\alpha\in\lambda}c^{(\alpha)} g(u^{(\alpha)})=0$ must hold in
$V,$ where $c^{(\alpha)}\in K,$ almost all zero.
Since $R$ is a valuation ring, and all but finitely many
$c^{(\alpha)}$ are zero, we can, by multiplying by an
appropriate member of $K,$ assume that all $c^{(\alpha)}$ lie
in $R,$ but that not all lie in the maximal ideal.
Hence $\sum_{\alpha\in\lambda}
\overline{c^{(\alpha)}}\ \overline{u^{(\alpha)}}\in k^I$ is a
nontrivial linear combination of the $\overline{u^{(\alpha)}},$
so as noted, it has all but finitely many entries nonzero.
Thus $u=\sum_{\alpha\in\lambda}c^{(\alpha)} u^{(\alpha)}$ is a member of
$\ker(g)$ with all but finitely many entries invertible.
\end{proof}

We can now combine the above result with
Lemmas~\ref{L.rat} and~\ref{L.JPBell}.
(In contrast, Corollary~\ref{C.<meas} does not yield a subspace
of $k^I$ of the desired sort, and cannot be modified to do
so, for the reasons noted in the second paragraph of~\S\ref{S.I}.)

\begin{theorem}\label{T.overR}
Let $R$ be a commutative valuation ring with infinite residue
field $k,$ and $f:A=\prod_I A_i\to B$ a
homomorphism from a direct product of $\!R\!$-algebras to a
torsion-free $\!R\!$-algebra $B.$
Let us write $\mathrm{rk}_R(B)$ for the rank of $B$
as an $\!R\!$-module; i.e., the common cardinality of all
maximal $\!R\!$-linearly independent subsets of $B.$
Suppose further that either\\[.2em]
\textup{(i)}\ \ $\mathrm{rk}_R(B)<\card(k),$ and $\card(I)\leq\card(k),$
or\\[.2em]
\textup{(ii)}\,\ $\mathrm{rk}_R(B)<2^{\aleph_0},$ and
$\card(I)=\aleph_0.$\vspace{.2em}

Then, as in Theorem~\ref{T.main},
\begin{equation}\begin{minipage}[c]{35pc}\label{d.B=2}
$B\ =\ f(A_{\mathrm{fin}})+Z(B),$
\end{minipage}\end{equation}
and $f$ can be written as the sum of a homomorphism $f_1$ that
factors through the projection to a finite subproduct
of $\prod_I A_i,$ and a homomorphism $f_0$ with image in~$Z(B).$
\end{theorem}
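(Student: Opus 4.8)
The plan is to mirror the proof of Theorem~\ref{T.main} as closely as possible, using Lemma~\ref{L.supp2} in place of Lemma~\ref{L.supp} and Lemma~\ref{L.aeunit} in place of the bare cofinite-support results. First I would establish~(\ref{d.B=2}): in case~(i) I combine Lemma~\ref{L.rat} (which, for $\card(I)\le\card(k)$, produces a subspace of $k^I$ of dimension $\card(k)$ all of whose nonzero members have cofinite support) with Lemma~\ref{L.aeunit}, taking $\lambda=\card(k)$, to conclude that for each $a\in A$ the kernel of the map $g_a:R^I\to B$ of~(\ref{d.g2}) contains an element almost all of whose entries are invertible; then Lemma~\ref{L.supp2}(iii) gives that $a\in A_{\mathrm{fin}}+f^{-1}(Z(B))$. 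Here one should note that, although $B$ is an $\!R\!$-module rather than a $\!K\!$-vector-space, Lemma~\ref{L.aeunit} is stated for a $\!K\!$-vector-space target $V$; so I would pass to $V=B\otimes_R K$, observe that since $B$ is torsion-free the natural map $B\to V$ is injective and $g_a$ composed with it is $\!R\!$-linear, and that $\dim_K(V)=\mathrm{rk}_R(B)<\card(k)=\lambda$, so the hypotheses of Lemma~\ref{L.aeunit} are met. An element of $\ker$ of the composite is automatically in $\ker(g_a)$ because $B\hookrightarrow V$. Case~(ii) is identical with Lemma~\ref{L.JPBell} supplying the subspace $W\subseteq k^I$ of continuum dimension and $\lambda=2^{\aleph_0}$.

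Next I would prove the refined ``$f=f_1+f_0$'' statement by copying verbatim the second half of the proof of Theorem~\ref{T.main}, with the obvious translations. Define $I_1=\{i\in I\mid f(A_i)\not\subseteq Z(B)\}$, pick $a_i\in A_i$ with $f(a_i)\notin Z(B)$ for each $i\in I_1$, set $a=(a_i)_{i\in I_1}\in\prod_{I_1}A_i$, and use~(\ref{d.B=2}) to write $f(a)=f(a')+z$ with $a'\in A_{\mathrm{fin}}$, $z\in Z(B)$. The chain of identities showing $i_1\in\supp(a')$ for each $i_1\in I_1$ is word-for-word~(\ref{d.ai0x}): it only uses bilinearity of the multiplication, the fact that the $A_i$ annihilate one another as subalgebras of a direct product, and that $z\in Z(B)$ — none of which cares whether the base is a field or an integral domain. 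Hence $I_1\subseteq\supp(a')$ is finite. Then $f_1$ (project to $\prod_{I_1}A_i$, then $f$) and $f_0$ (project to $\prod_{I-I_1}A_i$, then $f$) are $\!R\!$-algebra homomorphisms with $f=f_1+f_0$ whose images annihilate each other; since $B=f_1(A)+Z(B)$ by the definition of $I_1$, and $f_0(A)$ annihilates both $f_1(A)$ and $Z(B)$, we get $f_0(A)\subseteq Z(B)$.

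I expect the only real subtlety — and it is a small one — to be the bookkeeping around the target of Lemma~\ref{L.aeunit}: making explicit that one applies it with $V=B\otimes_R K$, justifying that $B\to B\otimes_R K$ is injective via torsion-freeness, that $\dim_K(B\otimes_R K)=\mathrm{rk}_R(B)$, and that the resulting $u\in\ker(g_a)$ really lies in the kernel of the honest $\!R\!$-linear map into $B$ and not merely into $V$. Everything else is a routine transcription. I would also remark in passing, as the paper does after Theorem~\ref{T.main}, that $f_1+f_0$ is a homomorphism precisely because the images annihilate one another, so no further check is needed there; and I would point out (as already flagged in the text preceding Lemma~\ref{L.aeunit}) that there is no valuation-ring analogue of case~(iii), since Corollary~\ref{C.<meas} does not furnish a subspace of $k^I$ of the kind Lemma~\ref{L.aeunit} consumes.
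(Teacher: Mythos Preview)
Your proposal is correct and follows essentially the same route as the paper: combine Lemma~\ref{L.aeunit} with Lemma~\ref{L.rat} (case~(i)) or Lemma~\ref{L.JPBell} (case~(ii)) to feed Lemma~\ref{L.supp2}(iii), then copy the $I_1$--argument from Theorem~\ref{T.main} for the decomposition $f=f_1+f_0$. The only cosmetic difference is that the paper handles the ``target of Lemma~\ref{L.aeunit}'' subtlety by replacing $B$ with $B\otimes_R K$ once at the outset (noting $\mathrm{rk}_R(B)=\dim_K(B\otimes_R K)$), rather than composing each $g_a$ with the embedding $B\hookrightarrow B\otimes_R K$ as you do; the two formulations are equivalent.
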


\begin{proof}
Since $B$ is $\!R\!$-torsion free, it embeds in
the $\!K\!$-algebra $B\otimes_R K;$ so replacing $B$ with this
algebra, we may assume it a $\!K\!$-algebra.
In particular, $\mathrm{rk}_R(B)$ becomes $\dim_K(B).$

Combining Lemma~\ref{L.aeunit}
with Lemma~\ref{L.rat} in case~(i) and with
Lemma~\ref{L.JPBell} in case~(ii), we see that in
either case, every $a\in A$ satisfies the hypothesis
of Lemma~\ref{L.supp2}(iii), giving~(\ref{d.B=2}).

The final statement about the decomposition of $f$
is proved exactly as in Theorem~\ref{T.main}.
(That part of the proof, the final paragraph, does
not use the $\!k\!$-vector-space structure.)
\end{proof}

It would be interesting to know whether one can get a
version of Theorem~\ref{T.overR} under a hypothesis similar
to that of Theorem~\ref{T.main}(iii).

The following example shows that in Theorem~\ref{T.overR}, one
cannot weaken the assumption that $k$ is infinite to merely
say that $R$ is.

\begin{lemma}\label{L.DVR}
Let $R$ be a complete discrete valuation ring with finite
residue field \textup{(}e.g., the ring of $\!p\!$-adic integers
for a prime $p,$ or the formal power series ring $k[[t]]$
for $k$ a finite field\textup{)}, and let $I$ be an infinite set.

Then there exists a surjective $\!R\!$-algebra homomorphism
$R^I\to R$ whose kernel contains
$(R^I)_\mathrm{fin},$ and which therefore
does not satisfy\textup{~(\ref{d.B=2})}.
\end{lemma}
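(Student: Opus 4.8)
The plan is to build the desired homomorphism $R^I\to R$ as an $\!R\!$-linear functional that happens to respect multiplication because its image is made into an algebra with zero multiplication. Concretely, regard $R^I$ as an $\!R\!$-algebra under coordinatewise operations, and let $R$ be given the zero multiplication; then \emph{any} surjective $\!R\!$-module map $\phi:R^I\to R$ that kills $(R^I)_{\mathrm{fin}}$ is automatically an algebra homomorphism, since on the target all products vanish and on the source the relevant products land in $(R^I)_{\mathrm{fin}}$ only if one of the factors does -- actually more simply, $\phi$ is a homomorphism because $\phi(xy)$ and $\phi(x)\phi(y)=0$ must be shown equal, and we will arrange this by a direct argument. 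So the core task reduces to: exhibit a surjective $\!R\!$-linear map $R^I\to R$ vanishing on $(R^I)_{\mathrm{fin}}$. Such a map, together with the observation that $R$ with zero multiplication has $Z(R)=R$, witnesses the failure of~(\ref{d.B=2}): here $f(A_{\mathrm{fin}})=\{0\}$ and $Z(B)$ computed in the codomain algebra $B=R$ (zero multiplication) is all of $R$, so~(\ref{d.B=2}) would read $R=\{0\}$, which is false -- wait, that makes~(\ref{d.B=2}) hold trivially. Let me reconsider: the point is rather that $B=R$ should carry a \emph{nonzero} multiplication (e.g.\ its given ring multiplication), so that $Z(B)=\{0\}$ and $f(A_{\mathrm{fin}})=\{0\}$, whence $f(A_{\mathrm{fin}})+Z(B)=\{0\}\neq R=B$.

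Thus the first step is to check that a surjective $\!R\!$-module homomorphism $R^I\to R$ that annihilates $(R^I)_{\mathrm{fin}}$ is in fact a ring homomorphism for the given (nonzero) multiplication on $R$. This is the subtle point, and I expect it to force a specific choice of $\phi$ rather than an arbitrary one: we need $\phi(uv)=\phi(u)\phi(v)$ for all $u,v\in R^I$. The natural candidate is to pass to a quotient. Since $R$ is a complete DVR with finite residue field, $R^I/(R^I)_{\mathrm{fin}}$ is an $\!R\!$-algebra, and one would like a ring retraction of $R$ into $R^I$ whose composite with the quotient splits. Equivalently, one seeks an ideal-like structure making the quotient map followed by a section behave multiplicatively. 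The cleanest route: use completeness to realize $R$ as $R=\varprojlim R/\mathfrak{m}^n$ and build, using the infinitude of $I$, a sequence of coordinates along which an element of $R^I$ can be ``read off'' $p$-adically; but this risks not being well-defined on all of $R^I$.

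Given the delicacy, I expect the actual construction to be: use the finite residue field to define, for each $n$, a coordinatewise ``digit'' and assemble a diagonal-style map. Partition $I$ into infinitely many infinite blocks $I=\bigsqcup_{n\ge 1} I_n$. For $u=(u_i)\in R^I$, within block $I_n$ look at the reductions $\overline{u_i}\in R/\mathfrak{m}$; since $R/\mathfrak{m}$ is finite, some residue class $c_n(u)\in R/\mathfrak{m}$ occurs infinitely often in $I_n$ -- but this is not canonical, so instead fix in advance a nonprincipal ultrafilter $\mathcal{U}_n$ on each $I_n$ and let $c_n(u)=\lim_{\mathcal{U}_n}\overline{u_i}$, which \emph{is} a ring homomorphism $R^I\to R^{I_n}/\mathcal{U}_n\twoheadrightarrow (R/\mathfrak{m})^{I_n}/\mathcal{U}_n$; since $R/\mathfrak{m}$ is finite, the ultrapower $(R/\mathfrak{m})^{I_n}/\mathcal{U}_n$ is just $R/\mathfrak{m}$ again. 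Then set $\phi(u)=\sum_{n\ge 1}\hat c_n(u)\,t^{n-1}$ where $\hat c_n(u)\in R$ is a Teichmüller-type lift of $c_n(u)$ and $t$ is a uniformizer; completeness makes the sum converge. But multiplicativity of $\phi$ fails because lifting digits and summing is not a ring map in general.

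The resolution -- and the step I expect to be the genuine obstacle -- is to instead take the \emph{composite} $R^I\to R^{I_1}/\mathcal{U}_1$, i.e.\ just use a \emph{single} nonprincipal ultrafilter $\mathcal{U}$ on $I$. The ultrapower $R^I/\mathcal{U}$ is an $\!R\!$-algebra, the quotient map $q:R^I\to R^I/\mathcal{U}$ is a surjective $\!R\!$-algebra homomorphism, and it annihilates $(R^I)_{\mathrm{fin}}$ because every finitely-supported element has support $\notin\mathcal{U}$. It then suffices to produce a surjective $\!R\!$-algebra homomorphism $R^I/\mathcal{U}\to R$; composing gives the required $f$. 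Now $R^I/\mathcal{U}$ is an ultrapower of the complete DVR $R$; by {\L}o\'s's theorem it is again a discrete valuation ring (non-complete in general), its residue field is the ultrapower of the finite field $R/\mathfrak{m}$, hence equals $R/\mathfrak{m}$, and there is a canonical surjection $R^I/\mathcal{U}\to R$ given by: send an element to its image under the composite $R^I/\mathcal{U}\to (R^I/\mathcal{U})/\bigcap_n(\mathfrak m^n\text{-part})\cong \varprojlim_n (R/\mathfrak m^n)^I/\mathcal U=\varprojlim_n R/\mathfrak m^n=R$, using finiteness of each $R/\mathfrak{m}^n$ to collapse the ultrapower at each finite level. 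This composite is a ring homomorphism, is surjective (each level surjects compatibly), and kills $(R^I)_{\mathrm{fin}}$. That gives the stated $R^I\to R$. The one thing to verify carefully -- the hard part -- is that this $\varprojlim$ map out of $R^I/\mathcal{U}$ is well-defined and surjective, which comes down to the fact that $\bigcap_n \mathfrak{m}^n R=0$ in $R$ (discreteness) plus $R=\varprojlim R/\mathfrak m^n$ (completeness) plus finiteness of the residue rings $R/\mathfrak m^n$; I would spell this out as a short lemma and then conclude.
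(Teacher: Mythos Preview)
After your detours, the construction you settle on---a nonprincipal ultrafilter $\mathcal{U}$ on $I$, followed by the map $R^I/\mathcal{U}\to\varprojlim_n (R/\mathfrak{m}^n)^I/\mathcal{U}\cong\varprojlim_n R/\mathfrak{m}^n=R$---is correct and is exactly the paper's approach. The paper simply packages this more compactly: since $R=\varprojlim R/(p^n)$ with each quotient finite, $R$ is a compact topological ring, and the map $a\mapsto\lim_{\mathcal{U}}a_i$ (the ``standard part'' map) is a surjective ring homomorphism $R^I\to R$ killing $(R^I)_{\mathrm{fin}}$; your level-by-level inverse-limit description is precisely what this ultrafilter limit means in the profinite topology. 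Your earlier attempts (zero multiplication on $R$, Teichm\"uller-digit assembly) are indeed dead ends for the reasons you note, and can be dropped from a clean write-up.
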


\begin{proof}[Sketch of proof]
Let $p$ be a generator of the maximal ideal of $R.$
(So in the $\!p\!$-adic example, ``$p$'' can be the
given prime $p,$ while in the formal power series example, it
can be taken to be~$t.)$

Then $R$ is the inverse limit of the system of finite rings
$R/(p^n),$ hence it admits a structure of compact topological ring.
Letting $\U$ be any nonprincipal ultrafilter on $I,$
we can take limits of $\!I\!$-tuples of elements of $R$
with respect to $\U$ under this compact topology,
and so get a ring homomorphism $f:R^I\to R$
defined by $f(a)=\lim_{\,\U}\ a_i\in R.$
(In nonstandard analysis, this is called the ``standard part map'';
cf.~\cite[p.82, Theorem~5.1]{Loeb}.)
Clearly, this homomorphism is
surjective; but for $a\in(R^I)_\mathrm{fin},$ we clearly have $f(a)=0.$
\end{proof}

Returning to the second paragraph of this section, we remark that
there actually do exist commutative rings $R$ (other than
fields) with the property that for
certain infinite cardinals $\mu,$ every $\!\mu\!$-tuple of nonzero
elements of $R$ has a nonzero common multiple; so that
if our index-set $I$ has cardinality $\leq\mu,$
we can, as proposed in that paragraph, get a result on homomorphisms
from a product of algebras $\prod_I A_i$
to a $\!R\!$-torsion-free $\!R\!$-algebra $B$
by tensoring with the field of fractions of $R$ and
applying the results of earlier sections.
For example, any nonprincipal ultraproduct of
integral domains has common multiples of countable families, and
more generally, for any cardinal $\mu,$ an ultraproduct of integral
domains with respect to a $\!\mu\!$-regular ultrafilter
\cite[4.3.2 et seq.]{Ch+Keis}
has $\!\mu\!$-fold common multiples.
A different class of examples is given by valuation rings whose
value group has cofinality $\geq\mu$ as an ordered set.

But such exotic rings $R$ are
less often used than general valuation rings.

\section{Can the cardinality assumptions of \S\S\ref{S.dimB}-\ref{S.I} be improved?}\label{S.further}

The results of \S\ref{S.dimB} and \S\ref{S.I}
give sufficient conditions on $\card(k),$
$\card(I)$ and $\dim_k(V)$ for the kernel of a map $g:k^I\to V$ to
have elements with cofinite support.
We may ask how close to optimal those results are.

For any $k$ and any nontrivial $V,$ if a statement of that sort is
to hold, it must require $\card(I)$ to be less than all
measurable cardinals $\mu>\card(k)$ (if these exist).
This is because any $I$ of cardinality greater
than or equal to such a $\mu$
admits a nonprincipal $\!\card(k)^+\!$-complete ultrafilter $\U,$
which makes $k^I/\,\U$ one-dimensional (cf.\ proof of~(\ref{d.dim1})
above, or \cite[Theorem~49]{prod_Lie1}),
and hence embeddable in $V,$
though the kernel of $k^I\to k^I/\,\U$ consists
of elements whose zero-sets lie in $\U,$ and hence are infinite.
Thus, Corollary~\ref{C.<meas}
has the weakest possible hypothesis on $I.$
However, that corollary is proved under the strong hypothesis
that $\dim_k(V)$ be finite; we don't know whether the same weak
hypothesis on $I$ can be combined with higher bounds on $\dim_k(V).$

On the other hand, fixing $k$ and an infinite set $I,$ and looking at
how large $V$ can be allowed to be, we see that for $V=k^{\aleph_0},$
projection of $k^I$ to a countable subproduct
gives a map $k^I\to V$ whose kernel has no elements
of finite support; so we cannot allow $\dim_k(V)$ to
reach $\dim_k(k^{\aleph_0}).$
By the Erd\H{o}s-Kaplansky Theorem \cite[Theorem~IX.2, p.246]{NJ},
this equals $\card(k)^{\aleph_0}.$
Now if $\card(k)$ has the form $\lambda^{\aleph_0}$
for some $\lambda,$ then $\card(k)^{\aleph_0}=\card(k);$
so in that case, Lemma~\ref{L.rat} gives the
weakest possible hypothesis on $\dim_k(V).$
Likewise, the hypothesis on $\dim_k(V)$ in
Lemma~\ref{L.JPBell} is optimal for countable~$k.$
But we don't know whether for general uncountable $k,$
we can weaken the hypothesis $\dim_k(V)<\card(k)$ of Lemma~\ref{L.rat}
all or part of the way to $\dim_k(V)<\card(k)^{\aleph_0}.$

Turning to our results on algebras over fields,
let us mention that~\cite[Theorem~19]{prod_Lie1}
combines the very weak hypothesis on $\card(I)$ in
Theorem~\ref{T.main}(iii) of this note with the
hypothesis $\dim_k(B)\leq\aleph_0,$
weaker than that of Theorem~\ref{T.main}(iii),
but imposes the additional condition
that as an algebra, $B$ satisfy ``chain
condition on almost direct factors'' (defined there).
That condition is automatic for finite-dimensional algebras,
hence that result subsumes part~(iii) of our present theorem.
We do not know whether that chain condition can be dropped from the
result of~\cite{prod_Lie1}.

Incidentally, most of the results of~\cite{prod_Lie1} do not exclude
the case where $\card(I)$ is $\geq$ a measurable cardinal
$>\card(k),$ but instead give, in that case, a conclusion in which
factorization of $f:\prod_I A_i\to B$
through finitely many of the $A_i$ is replaced by factorization
through finitely many ultraproducts of the $A_i$ with respect to
$\!\card(k)^+\!$-complete ultrafilters.
Though similar factorizations for a linear map $g:k^I\to B$
appear in Lemma~\ref{L.ultra} of this note, an apparent
obstruction to carrying these over
to results on algebra homomorphisms is that our
proof of the latter applies the results
of \S\S\ref{S.dimB}-\ref{S.I} not just to a single linear map
$g_a:k^I\to B,$ but to one such map for each $a\in A=\prod_I A_i;$
and different maps yield different families of ultrafilters.
However, one can get around this by choosing finitely many
elements $a_1,\dots,a_d\in A$ whose images under $f$ span $B,$
regarding them as together determining a map
$g_{a_1,\dots,a_d}:k^I\to B^d,$
applying Lemma~\ref{L.ultra} to that map, and then showing
that the image under $f$ of any element in the kernels of all the
resulting ultraproduct maps has zero product with the images
of $a_1,\dots,a_d\in A,$ hence lies in $Z(B).$
For the sake of brevity we have not set down formally a generalization
of Theorem~\ref{T.main}(iii) based on this argument.

For other results on cardinality and factorization of maps on
products, but of a somewhat different flavor, see~\cite{E+F+M}.

% - - - - - - - - - - - - - - - - - - - - - - - - - - - - - -

\end{document}